\theoremstyle{plain}
\newtheorem{theorem}{Theorem}[section]
\newtheorem{cor}[theorem]{Corollary}
\newtheorem{lem}[theorem]{Lemma}
\newtheorem{prop}[theorem]{Proposition}
\theoremstyle{definition}
\newtheorem{examp}[theorem]{Example}
\newtheorem{examps}[theorem]{Examples}
\newtheorem{dfn}[theorem]{Definition}
\newtheorem{rem}[theorem]{Remark}
\newtheorem{rems}[theorem]{Remarks}
\theoremstyle{remark}
\newcommand{\RNum}[1]{\uppercase\expandafter{\romannumeral #1\relax}}
\providecommand*{\twoheadrightarrowfill@}{%
  \arrowfill@\relbar\relbar\twoheadrightarrow
}
\providecommand*{\twoheadleftarrowfill@}{%
  \arrowfill@\twoheadleftarrow\relbar\relbar
}
\providecommand*{\xtwoheadrightarrow}[2][]{%
  \ext@arrow 0579\twoheadrightarrowfill@{#1}{#2}%
}
\providecommand*{\xtwoheadleftarrow}[2][]{%
  \ext@arrow 5097\twoheadleftarrowfill@{#1}{#2}%
}
\newcommand\setItemnumber[1]{\setcounter{enum\romannumeral\@enumdepth}{\numexpr#1-1\relax}}
\newcommand\norm[1]{\left\lVert#1\right\rVert}
\def\ind{\@ifnextchar[{\@with}{\@without}}
\def\@with[#1]#2{\mathrm{Ind}(#1,#2)}
\def\@without#1{\mathrm{Ind}(#1)}
\newcounter{para}[section]
\newcommand\KK[0]{K\! K}
\DeclareMathOperator\End{End}
\DeclareMathOperator\Cl{\mathbb{C}l}
\DeclareMathOperator\dnc{DNC}
\DeclareMathOperator\crit{Crit}
\DeclareMathOperator\critr{RCrit}
\DeclareMathOperator\blup{Blup}
\DeclareMathOperator\graph{graph}
\DeclareMathOperator\Hom{Hom}
\DeclareMathOperator\dom{Dom}
\DeclareMathOperator\codim{Codim}
\newcommand{\R}{\mathbb{R}}
\newcommand{\N}{\mathbb{N}}
\newcommand{\C}{\mathbb{C}}
\begin{document}
\title{Witten deformation using Lie groupoids}
\author{Omar Mohsen}
\date{}
\maketitle
\abstract{We express Witten's deformation of Morse functions using deformation to the normal cone and $C^*$-modules. This allows us to obtain asympotitcs of the `large eigenvalues'. This is then applied to the case of Morse-Bott functions inspired by \cite{MR852155}.

Our methods extend to Morse functions along a foliation. We construct the Witten deformation using any generic function on an arbitrary foliation on a compact manifold and establish the compactness of its resolvent. When the foliation has a holonomy invariant transverse measure we show that our result implies Morse inequialities obtained  by Connes and Fack \cite{MR2276915} in a slightly more general situation.}
\section*{Introduction}In this article, we give an application of deformation groupoids to Witten's deformation of a Morse function. A Morse function $f$ is a real valued smooth function on a compact manifold $M$ with nondegenerate critical points. This is a generic condition by results of Morse. In \cite{MR1451874}, Morse proved the so called Morse inequalities highlighting a relation between the number of critical points of $f$ and the Betti numbers $\dim(H^\cdot (M)).$ 
\begin{theorem}[Morse inequalities]Let $c_i$ be the number of critical point of $f$ of index $i$.
\begin{align*}
 \dim(H^i(M,\R))\leq c_i,\quad \sum_{i=0}^k(-1)^{k-i}\dim(H^i(M,\R))\leq \sum_{i=0}^k(-1)^{k-i}c_i
\end{align*}
\end{theorem}

He did so by studying the level sets $f^{-1}(]-\infty,a])$ and seeing how they change as $a$ passes by a critical value. In \cite{MR683171}, Witten proposed an analytic way to prove Morse inequalities. His method consists of deforming the De Rham operator $d$ to become $d_{t}=e^{-\frac{f}{t}}de^{\frac{f}{t}}$, and then studying the associated Laplacian $\Delta_t=(d_t+d^*_t)^2$. Since the operator $d_t$ is conjugate to $d$, it follows that $\ker(\Delta_t)$ is isomorphic to $\ker(\Delta)$. Hence by Hodge theory, $\dim(\ker(\Delta^i_t))=\dim(H^i(M,\R))$ for all $t>0$, where $\Delta^i_t$ denotes the deformed Laplacian acting on forms of degree $i$. He then proves that, as $t\to 0^+$, the spectrum $sp(\Delta_t^i)$ gets separated into two parts, the first part is finite and consists of an eigenvalue for each critical point of $f$ of index $i$, and the second part consists of eigenvalues which converge to $+\infty$. Morse inequalities are then corollaries of this decomposition. 
In this article, we give another point of view of Witten deformation, a global one which avoids local coordinates. By applying the deformation to the normal cone construction, one obtains  a smooth manifold whose underlying set is equal to $$\dnc(M,\crit(f))=M\times ]0,1]\sqcup_{a\in \crit(f)}T_aM\times\{0\}.$$ The deformed Laplacian $t^2\Delta_t$ with a slight normalisation acts on the submanifold $M\times \{t\}\subseteq \dnc(M,\crit(f)).$ We show that this operator can be glued smoothly to the direct sum of Harmonic oscillators at every critical point acting on  $\sqcup_{a\in \crit(f)}T_aM\times\{0\}\subseteq \dnc(M,\crit(f)).$ The glued operator will be called the global Witten deformation.

 The key tool that is used to be able to express the smoothness of the global Witten deformation is that of Lie groupoids. The theory of Lie groupoids in connection with pseudo-differential operators, $C^*$-algebras, and index theory started in the work of Connes \cite{MR679730,MR775126,MR823176,MR1303779,MR548112}, and was extensively developed. See for example \cite{MR3669118,MR1467076,MR1687747,MR2227132}. The natural projection $\pi_\R:\dnc(M,\crit(f))\to \R$ is a submersion, hence the fibers define a trivial foliation. The Lie groupoid that is used to study the global Witten deformation is the Lie groupoid associated to this foliation (see \cite{MR679730} for the general definition).

Once this operator is constructed, we show that it is a \textit{ regular operator} in the sense of Baaj and Woronowicz, see \cite{MR1325694}. We recall that regularity amounts to saying that continuous functional calculus can be applied to the global Witten deformation.

Furthermore by adapting the classical theorem which says that the De Rham operator on a complete manifold is locally elliptic to the context of Lie groupoids, we show that the global Witten deformation has \textit{compact resolvent} in the sense of Kasparov for $C^*$-modules \cite{MR918241}. Note that the manifold $\dnc(M,\crit(f))$ is not compact.

The compactness of the resolvent of the global Witten deformation gives immediately (see \cref{cont spectrum}) the asymptotics of the eigenvalues of the deformed Laplacian. More precisely, we obtain \begin{theorem}\label{Witten theorem introduction}
 
Let $$\lambda^p_1(t)\leq \lambda^p_2(t)\cdots$$ denote the spectrum of $\Delta_t^p$, then for every $i\in \N$, $$\lim_{t\to 0^+}t\lambda^p_i(t)=\lambda_i^p(0),$$where $\lambda_i^p(0)$ is the $i$'th eigenvalue of harmonic oscillator $$\Delta_0^p:=\bigoplus_{a\in \crit(f)}\left(d+d^*+c(d^2_a(f))\right)^2:\bigoplus_{a\in \crit(f)}L^2(\Lambda^p_\C T_aM)\to\bigoplus_{a\in \crit(f)} L^2(\Lambda^p_\C T_aM),$$ where $L^2(\Lambda^p_\C T_aM)$ is the set of all $L^2$ functions from $T_aM$ to $\Lambda^p_\C T_aM$, $d^2_af$ is the associated $1$-differential form on $T_aM$, and $c$ is the Clifford multiplication.\end{theorem}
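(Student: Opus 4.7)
The strategy is to reinterpret the family $\{\sqrt{t}(d_t+d_t^*)\}_{t\in(0,1]}$ together with the harmonic oscillator $d+d^*+c(d^2_af)$ at critical points as a \emph{single} regular self-adjoint operator on a Hilbert $C^*$-module over $C([0,1])$, and then deduce the eigenvalue convergence from compactness of its resolvent together with continuity of the spectrum under the resolvent topology.

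\paragraph{Step 1: A smooth model for the rescaled family.} First I would equip $X:=\dnc(M,\crit(f))$ with the manifold structure in which, near a critical point $a\in\crit(f)$, local coordinates are $(\xi,t)\in T_aM\times[0,1)$ and the map $X\setminus\pi_\R^{-1}(0)\to M\times(0,1]$ is $(\xi,t)\mapsto(\exp_a(\sqrt{t}\,\xi),t)$ (i.e.\ use the ``square-root'' normalisation of DNC, obtained for instance by pulling back along $t\mapsto t^2$). The submersion $\pi_\R:X\to[0,1]$ has fibres $M$ for $t>0$ and $\bigsqcup_{a\in\crit(f)}T_aM$ at $t=0$, and I would pull back the bundle $\Lambda^\bullet_\C T^*M$ to a smooth Hermitian bundle $E\to X$ (using a cutoff and the constant extension $\Lambda^\bullet_\C T^*_aM$ near each $a$). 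Let $\E$ be the Hilbert $C([0,1])$-module of $L^2$-sections of $E$ along the fibres of $\pi_\R$; equivalently $\E$ is the continuous field whose fibre at $t>0$ is $L^2(\Lambda^\bullet_\C T^*M)$ and whose fibre at $t=0$ is $\bigoplus_a L^2(T_aM,\Lambda^\bullet_\C T^*_aM)$.

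\paragraph{Step 2: A single regular operator whose fibres are the rescaled Witten operators.} On the interior one sets $D_t:=\sqrt{t}\,(d_t+d_t^*)=\sqrt{t}(d+d^*)+\tfrac{1}{\sqrt{t}}c(df)$. After the change of variables $x=\sqrt{t}\,\xi$ around $a$ and a Taylor expansion $df(x)=d^2_af\cdot x+O(|x|^2)$, a direct computation gives
\[
D_t\;=\;(d+d^*)_\xi\,+\,c(d^2_af\cdot\xi)\,+\,\sqrt{t}\,R_t(\xi,\partial_\xi),
\]
with $R_t$ a first-order differential operator in $\xi$ depending smoothly on $t\in[0,1]$. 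Hence the family $(D_t)_{t\in[0,1]}$ is a smooth family of first-order differential operators along the fibres of $\pi_\R$, i.e.\ a longitudinal differential operator on the foliation on $X$ defined by $\pi_\R$; this is exactly the class to which the Baaj--Julg formalism of \cite{MR715325} applies. I would then check that $D:=(D_t)_t$ defines a symmetric unbounded operator on $\E$ whose graph is a closed submodule and which is essentially self-adjoint; regularity and self-adjointness follow from the general machinery for longitudinally elliptic symmetric operators.

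\paragraph{Step 3: Compactness of the resolvent on $\E$.} This is the heart of the argument. The resolvent $(D\pm i)^{-1}$ is an element of $\mathcal{L}(\E)$ and I must show it lies in $\mathcal{K}(\E)$. For $t>0$ fixed, $M$ is compact and $D_t^2$ is elliptic, so each fibre has compact resolvent. For $t=0$, each summand $(d+d^*+c(d^2_af))^2$ is the Witten Laplacian on $T_aM$ with quadratic confining potential $|d^2_af\cdot\xi|^2$ (non-degeneracy of $d^2_af$ follows from $f$ being Morse), hence a harmonic oscillator with compact resolvent. The nontrivial step is to upgrade this to compactness as an element of $\mathcal{K}(\E)$, i.e.\ uniformly as $t\to 0^+$. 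The natural way is to write $(D^2+1)^{-1}$ as a norm-limit of a sequence of operators with compactly supported Schwartz kernel (along the fibres) on $X$ and to use that the potential $|df|^2/t+O(1)$ appearing in $D_t^2$ dominates a uniform multiple of the identity outside any neighbourhood of $\crit(f)\times\{0\}\subset X$; away from that closed set the DNC manifold is relatively compact, giving the remaining control.

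\paragraph{Step 4: From compact resolvent to convergence of eigenvalues.} Once $D$ is regular self-adjoint on $\E$ with $(D\pm i)^{-1}\in\mathcal{K}(\E)$, classical results (min--max, or the spectral theorem for Hilbert-module operators) give discrete spectrum in each fibre and continuity of the ordered eigenvalues $t\mapsto\mu_i(t)$ of $D_t^2$ on $[0,1]$. Since $D_t^2=t\Delta_t$ for $t>0$ and $D_0^2=\Delta_0^p$ on the $p$-form summand, the equality $\mu_i(t)=t\lambda_i^p(t)$ and continuity at $t=0$ yield $\lim_{t\to 0^+}t\lambda_i^p(t)=\lambda_i^p(0)$, proving the theorem.

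\paragraph{Expected main obstacle.} The computation that $D_t$ extends smoothly across $t=0$ in the DNC coordinates is a routine Taylor expansion, and the spectral continuity statement is standard once the framework is in place. The genuinely delicate point is Step~3: establishing that the resolvent of $D$ is $\mathcal{K}(\E)$-compact, rather than merely fibrewise compact, since this must combine ellipticity on the interior with the harmonic-oscillator confinement at $t=0$ into a single uniform statement on $X$. This is where the DNC viewpoint pays off, because it packages both mechanisms as a single longitudinally elliptic operator on a foliated manifold whose ``leaf space'' is the compact interval $[0,1]$.
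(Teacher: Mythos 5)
Your overall strategy is indeed the same as the paper's: build a Hilbert $C([0,1])$-module from the deformation to the normal cone (the paper works with the groupoid $\dnc(M\times M,\crit(f))$, Morita equivalent to the space $[0,1]$, with the rescaled algebroid metric $g/t^2$), show the family of rescaled Witten operators is a single regular self-adjoint operator with compact resolvent, and deduce the eigenvalue convergence via continuity of the spectrum of compact module operators. Your ``square-root normalisation'' is cosmetically different but equivalent to the paper's statement that on $M\times\{t\}$ the normalized operator squares to $t^2\Delta_{t^2}$.

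The genuine gap is Step 3, which you correctly identify as the heart of the argument but then only gesture at. Asserting that $(D^2+1)^{-1}$ is a norm-limit of compactly supported kernels because the potential grows off $\crit(f)\times\{0\}$ is precisely the claim that needs proving, not the proof; the whole difficulty is upgrading fibrewise compactness to compactness in $\mathcal{K}(\E)$ uniformly as $t\to 0^+$. The paper's mechanism is \cref{keyprop witten}: for a Lie groupoid with complete algebroid metric and a $1$-form $\alpha$ with $d\alpha$ bounded, $\mathcal{L}_{\alpha^{\#}}+\mathcal{L}_{\alpha^{\#}}^*$ bounded, and $\|\alpha\|$ proper on $G^0$, the operator $d+d^*+c(\alpha)$ has compact resolvent. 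Its proof uses the operator inequalities $(1+(d+d^*)^2+c(\alpha)^2)^{-1}\leq(1+(d+d^*)^2)^{-1}$ and $\leq(1+\|\alpha\|^2)^{-1}$ together with Pedersen's factorization to write the resolvent as a compact operator times bounded ones, using that $(1+\|\alpha\|^2)^{-1/4}\in C_0(G^0)$ by properness; it then passes to $(d+d^*+c(\alpha))^2$ via boundedness of the graded commutator $[d+d^*,c(\alpha)]$. That commutator bound is a nontrivial hypothesis your Step 2 does not flag, and the paper verifies it carefully on the DNC space (using that $\mathcal{L}_{\alpha^{\#}}+\mathcal{L}^*_{\alpha^{\#}}$ is scale-invariant under $g\mapsto g/t^2$). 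Similarly, Step 4 is not ``standard min--max'': it needs \cref{cont spectrum}, continuity of singular values of a compact operator on a $C[0,1]$-module, which rests on Mirsky's Lipschitz bound $|\mu_i(T_1)-\mu_i(T_2)|\leq\norm{T_1-T_2}$ and a density argument from finite-rank operators.
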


Moreover, we extend the above construction and results to the case of Morse-Bott functions, inspired by Bismut's article \cite{MR852155}.

\bigskip

Connes and Fack \cite{MR2276915}, generalised Witten's approach and Morse inequalities for foliations equipped with a holonomy invariant transverse measure. \begin{theorem}
[Connes-Fack Morse inequalities]Let $F$ be a regular foliation on a compact manifold, $\nu$ a holonomy invariant transverse measure, $f$ a nice function (see \cref{nice dfn}), $c_i=\nu(\crit_\mathcal{F}^i(f))$ the $\nu$-measure of longitudinally Morse critical points of $f$ of index $i$, and $\beta_i$ the $\nu$-dimension of the longitudinal De Rham cohomology. One has $$\beta_i\leq c_i,\quad   \sum_{i=0}^k(-1)^{k-i}\beta_i\leq\sum_{i=0}^k(-1)^{k-i}c_i.$$
\end{theorem}
The numbers $\beta_i, c_i$ in the previous theorem are Von Neumann dimensions which are usually real numbers.

\smallskip

We extend as well our results to longitudinal Witten deformation along an arbitrary regular foliations, not necessarily equipped with a holonomy invariant transverse measure. More precisely, let $F$ be a foliation on a compact manifold $M$, $f:M\to \R$ a smooth function such that the longitudinal derivative $d_Ff:M\to F^*$ is transverse to the zero section. This is a generic condition by Thom's transversality theorem. We denote by $\crit_F(f):=\{x\in M:d_Ff(x)=0\}$ the manifold of longitudinal critical points. 

\smallskip

Notice that the points at which $\crit_F(f)$ is transverse to $F$ are the points at which $f$ is longitudinally Morse. In general, a foliation has no compact transversal. Hence $f$ will usually have longitudinal critical points of non Morse type. As we will see below, this adds difficulty to the study of Witten Laplacian.

\smallskip

As in the classical case one could define the deformed longitudinal Laplacian $$(e^{-\frac{f}{t}}d_Fe^{\frac{f}{t}}+e^{\frac{f}{t}}d^*_Fe^{-\frac{f}{t}})^2.$$ which now acts on the $C^*$-algebra of the Lie groupoid associated to the foliation that will be denoted by $\mathcal{G}(M,F)$. The advantage of this is that the deformed Laplacian being longitudinally elliptic has compact resolvent in Kasparov's sense.

\smallskip

Using the recent observation of Debord and Skandalis \cite{Debord:2017aa}, we construct a Lie groupoid $$\dnc(\mathcal{G}(M,F),\crit_F(f)):=\mathcal{G}(M,F)\times \R^*\sqcup \mathcal{N}^{\mathcal{G}(M,F)}_{\crit_F(f)}\times\{0\}.$$

\smallskip

We then construct a global longitudinal Witten deformation which acts on the $C^*$-algebra of the Lie groupoid $\dnc(\mathcal{G}(M,F),\crit_F(f))$ which on $\mathcal{G}(M,F)\times \{t\}$ is equal to $$t^2(e^{-\frac{f}{t}}d_Fe^{\frac{f}{t}}+e^{\frac{f}{t}}d^*_Fe^{-\frac{f}{t}})^2$$ and which is equal to a `Harmonic oscillator' on $\mathcal{N}^{\mathcal{G}(M,F)}_{\crit_F(f)}\times\{0\}$ at the critical points. The Harmonic oscillator at a critical point $x$ depends on the dimension of $T_x\crit_F(f)\cap F_x$. If $x$ is Morse (the intersection is trivial), then the Harmonic oscillator is the same as in the classical case. If not, it is slightly more complicated.
 
\smallskip

The methods that we used to prove in the classical case that the global Witten deformation is regular with compact resolvent can be used in the case of foliations. In \cref{Wittens thm foliations}, we show that the global longitudinal Witten deformation is regular and has compact resolvent as an operator on a $C^*$-module. In fact in \Cref{Basic lemma}, we prove a general result that applies to any Lie groupoid from which we deduce both \cref{Witten theorem introduction} and \cref{Wittens thm foliations}.

\bigskip

To obtain quantitative corollaries of \cref{Wittens thm foliations} and recover Connes-Fack inequalities, we assume that the foliation has a holonomy invariant transverse measure and some extra genericity assumption on $f$, see \cref{nice dfn}.

\smallskip

In \cite{MR548112}, Connes shows that  a holonomy invariant transverse measure defines a trace on the $C^*$-algebra of the foliation. We show that this trace extends to a $C([0,1])$-valued trace on $C^*(\dnc(\mathcal{G}(M,F),\crit_F(f)))$. This $C([0,1])$-trace together with  \cref{Wittens thm foliations} imply Connes-Fack Morse inequalities in a slightly more general situation. 

\smallskip

More precisely, we don't need to assume that the transverse measure is absolutely continuous with respect to the Lebesgue measure. In fact in \cref{continuity trace lemma}, we show a closely related assumption that is pivotal in the construction of the $C([0,1])$-valued trace on $C^*\dnc(\mathcal{G}(M,F),\crit_F(f))$ (see \cref{approx fun}). Such condition is valid for a generic function $f$. We finally show that if $f$ doesn't satisfy the mentioned above condition then the Morse inequalities for $f$ are still true by approximating $f$ by another function which satisfies the required conditions and whose critical points aren't far from those of $f$.

Finally let us remark that all our constructions and results work equally well if the Morse function is replaced by a Novikov $1$-form in both the classical and the foliation case.

\bigskip

This paper is organised as follows :

\smallskip

 In section 1, we recall the notion of the deformation to the normal cone following \cite{Debord:2017aa}.

\smallskip

In section 2, we establish regularity and compactness of the resolvent of an abstract Witten deformation on a Lie groupoid.

\smallskip

In section 3, we show that the results of section 2 admits as a corollary Witten's deformation in the classical case, as well as in the case of Morse-Bott functions like in Bismut's article \cite{MR852155}.

\smallskip

In section 4, we apply the results of section 2 to the case of foliated manifolds.

\smallskip

In section 5, Quantitative corollaries of section 4 are obtained under the additional hypothesis of the existence of holonomy invariant measure. In this section we use an independent result on continuity of $1$-parameter family of traces on $C^*$-algebras which is proven in section 6.

\section*{Acknowledgements}
I would like to thank my Ph.D advisor G. Skandalis for the numerous suggestions and discussions. This work was supported by grants from Région Ile-de-France. I would like to also thank C. Viterbo and the Referee for their helpful remarks.
\section{Deformation to the normal cone}\label{dnc section}
In this section, we recall the deformation to the normal cone construction and its relation to Lie groupoids following \cite{Debord:2017aa}. \textit{The deformation to the normal cone}\index{deformation to the normal cone} of a manifold $M$ along a submanifold $V$ is a manifold whose underlying set is $$\dnc(M,V):=M\times ]0,1]\sqcup N^M_V\times\{0\},$$ where $N^M_V$ is the normal bundle of $V$ inside $M$. The smooth structure is defined by covering $\dnc(M,V)$ with two open sets; the first is $M\times ]0,1]$ and the second is $\phi(N^M_V)\times]0,1]\sqcup N^M_V\times\{0\}$ where $\phi:N^M_V\to M$ is a tubular embedding.\footnote{To simplify the exposition, we will always assume that tubular neighbourhoods are diffeomorphisms on $N^M_V$.} The smooth structure on $\phi(N^M_V)\times]0,1]\sqcup N^M_V\times\{0\}$ is given by declaring the following map a diffeomorphism \begin{align*}
&\tilde{\phi}:N^M_V\times [0,1]\to \phi(N^M_V)\times]0,1]\sqcup N^M_V\times\{0\}\\
 &\tilde{\phi}(x,X,t)=(\phi(x,tX),t)\in M\times ]0,1],\quad t\neq 0\\
 &\tilde{\phi}(x,X,0)=(x,X,0)\in N^M_V\times\{0\}.
\end{align*}
This defines a smooth structure. Independence of $\phi$ follows by noticing that the following functions are smooth functions that generate the smooth structure:\begin{enumerate}
\item the function\begin{align*}
(\pi_M,\pi_\R):\dnc(M,V)&\to M\times \R\\
(x,t)&\to (x,t),\quad t\neq 0\\
(x,X,0)&\to (x,0)
\end{align*}
\item Let $f\in C^\infty(M)$ be a smooth function which vanishes on $V$. Therefore $df:N^M_V\to \R$ is well defined. The following function is smooth
\begin{align*}
\dnc(f):\dnc(M,V)&\to \R\\
(x,t)&\to \frac{f(x)}{t},\quad t\neq 0\\
(x,X,0)&\to df_x(X)
\end{align*}
\end{enumerate}

\begin{prop}[Functoriality of DNC]\label{Functoriality of DNC}
Let  $M,M'$ be smooth manifolds, $V\subseteq M$, $V'\subseteq M'$ submanifolds, $f:M\to M'$ a smooth map such that $f(V)\subseteq V'$. Then the map defined by 
\begin{align*}
\dnc(M,V)&\to \dnc(M',V')\\
(x,t)&\to (f(x),t),\quad t\neq 0\\
(x,X,0)&\to (f(x),df_x(X),0)
\end{align*}
is a smooth map that will be denoted by $\dnc(f)$. 
 Furthermore the map  $\dnc(f)$ is \begin{itemize}
\item  a submersion if and only if $f$ is a submersion and $f_{|V}:V\to V'$ is also a submersion.
 \item an immersion if and only if $f$ is an immersion and for every $v\in V$, $T_vV=df_v^{-1}(TV').$
 \end{itemize}
\end{prop}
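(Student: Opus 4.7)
The plan is to establish the three claims (smoothness, submersivity criterion, immersivity criterion) separately, using the characterization of the smooth structure by the generating functions listed in the text for submersion/immersion, and a local chart computation for the differential.

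For smoothness, I would avoid a direct chart computation and instead use the fact that $C^\infty(\dnc(M',V'))$ is generated (locally) by the two families of functions exhibited in the excerpt: the canonical projection $(\pi_{M'},\pi_\R)$, and the functions $\dnc(g)$ for $g\in C^\infty(M')$ vanishing on $V'$. The first pulls back under $\dnc(f)$ to $(f\circ\pi_M,\pi_\R)$, which is smooth. For the second, a direct check on the two strata shows that
\[
\dnc(g)\circ\dnc(f)=\dnc(g\circ f),
\]
and since $f(V)\subseteq V'$, the composition $g\circ f$ vanishes on $V$, so $\dnc(g\circ f)$ is smooth by the construction recalled above. This shows $\dnc(f)$ is smooth.

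For the submersion/immersion statements I would compute the differential $d\dnc(f)$ in two regimes. For $t\ne 0$ the map is just $(x,t)\mapsto(f(x),t)$, so the differential is $df_x\oplus\id_\R$, injective (resp.\ surjective) iff $f$ is an immersion (resp.\ submersion) at $x$. For $t=0$ and a point $(v,X,0)\in N^M_V\times\{0\}$, I would work in the chart $\tilde\phi$ of the excerpt, which identifies a neighborhood of $(v,X,0)$ with an open set of $N^M_V\times[0,1]$, and similarly for $\tilde\phi'$ on the target. Picking a connection to split $T_{(v,X)}N^M_V\cong T_vV\oplus(N^M_V)_v$, the differential of $\dnc(f)$ at $(v,X,0)$ becomes block triangular, with diagonal blocks equal to $d(f|_V)_v$ on $T_vV$, to the induced quotient map $(N^M_V)_v\to(N^{M'}_{V'})_{f(v)}$ on the fiber direction, and to $\id_\R$ on $\partial_t$. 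Independence from the chosen tubular embedding $\phi$ and from the connection follows from the same verification used for smoothness, since these diagonal blocks are exactly the intrinsic maps induced by the morphism of short exact sequences $0\to T_vV\to T_vM\to(N^M_V)_v\to 0$ into its primed counterpart via $df_v$.

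From this description, a five-lemma / snake-lemma argument applied to the morphism of short exact sequences yields: $d\dnc(f)_{(v,X,0)}$ is surjective iff both $d(f|_V)_v$ and the induced map on normals are surjective, which is in turn equivalent to $df_v$ being surjective and $d(f|_V)_v$ being surjective; and it is injective iff $df_v$ is injective and the induced map on normals is injective, the latter being equivalent to $df_v^{-1}(T_{f(v)}V')\subseteq T_vV$ (the reverse inclusion being automatic from $f(V)\subseteq V'$). Combining these equivalences with the $t\ne 0$ analysis gives exactly the two stated global criteria.

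The main technical obstacle will be the intrinsic identification of the diagonal blocks of the differential at $(v,X,0)$: one must check that the off-diagonal terms in $\tilde\phi$-coordinates (which involve the second derivatives of $f$ transverse to $V$ and the choice of connection) genuinely do not affect the rank or the kernel. I expect to handle this cleanly by testing $d\dnc(f)$ on the generating smooth functions, namely $\dnc(g)$ for $g$ vanishing on $V'$, rather than by wrestling with the chart formulas directly; this trades a messy computation for a short diagram chase.
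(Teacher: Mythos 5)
Your smoothness argument is correct and is the standard one (the paper itself only quotes the statement from \cite{Debord:2017aa} without proof): the identity $\dnc(g)\circ\dnc(f)=\dnc(g\circ f)$, combined with the fact that the functions $(\pi_{M'},\pi_{\R})$ and $\dnc(g)$ generate the smooth structure of $\dnc(M',V')$, does the job. Likewise, your block lower triangular description of $d\dnc(f)$ at $(v,X,0)$, with diagonal blocks $\id_{\R}$, $d(f_{|V})_v$ and $Nf_v$, is accurate, and it correctly yields the two ``if'' directions at every point.

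The gap is in the ``only if'' directions. The pointwise equivalence you assert --- that $d\dnc(f)_{(v,X,0)}$ is surjective (resp.\ injective) if and only if both diagonal blocks are --- is false for $X\neq 0$, and the ``main technical obstacle'' you propose to resolve, namely checking that the off-diagonal terms ``genuinely do not affect the rank or the kernel'', cannot be resolved because they do affect it. Concretely, take $M=\R^{2}_{v}\times\R_{\xi}$, $V=\R^{2}\times\{0\}$, $M'=\R_{v'}\times\R_{\xi'}$, $V'=\R\times\{0\}$, and $f(v_1,v_2,\xi)=(v_1,v_2\xi)$. In the global charts of the two deformation spaces one finds $\dnc(f)(v_1,v_2,X,t)=(v_1,v_2X,t)$, whose differential at $(v_1,0,X,0)$ is $(\dot v_1,\dot v_2,\dot X,\dot t)\mapsto(\dot v_1,X\dot v_2,\dot t)$, surjective for $X\neq 0$, even though neither $df$ nor $Nf$ is surjective at $(v_1,0,0)$: the off-diagonal block $\partial_v\partial_\xi f_2(v,0)\cdot X$ supplies the missing direction. (An analogous example breaks the injectivity version for $X\neq0$.) The proposition survives because it quantifies over all points: for the ``only if'' direction you should evaluate $d\dnc(f)$ only at the points $(v,0,0)$ of the zero section of $N^M_V$, where the off-diagonal blocks vanish identically and the differential genuinely equals $\id_{\R}\oplus d(f_{|V})_v\oplus Nf_v$; surjectivity (resp.\ injectivity) there forces both blocks to be surjective (resp.\ injective), and your five-lemma translation of these conditions into the stated conditions on $df_v$ then goes through unchanged. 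So the repair is to restrict the converse direction to the zero section, not to argue the off-diagonal terms away.
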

The map \begin{align*}
 N^M_V\to N^{M'}_{V'},\quad (x,X)&\to (f(x),df_x(X)) 
\end{align*} will be denoted by $Nf$.

\begin{prop}\label{DNC fibered product}
Let $M_1, M_2, M$ be manifolds, $V_i\subseteq M_i,V\subseteq M$ submanifolds, $f_i:M_i\to M$ smooth maps such that \begin{enumerate}
\item $f_i(V_i)\subseteq V$ for $i\in \{1,2\}$
\item the maps $f_i$ are transverse
\item the maps $f_{i}|V_i:V_i\to V$ are transverse
\end{enumerate} Then \begin{enumerate}
\item\begin{enumerate}[label=(\alph*)]
\item the maps $Nf_i:N^{M_i}_{V_i}\to N^M_V$ are transverse.
\item the natural map $$N^{M_1\times_MM_2}_{V_1\times_VV_2}\to N^{M_1}_{V_1}\times_{N^{M}_{V}}N^{M_1}_{V_2}$$ is a diffeomorphism.

\end{enumerate} 
Similarly for $\dnc$, we have \item\begin{enumerate}[label=(\alph*)]
 \item the maps $\dnc(f_i):\dnc(M_i,V_i)\to \dnc(M,V)$ are transverse.
 \item the natural map $$\dnc(M_1\times_MM_2,V_1\times_VV_2)\to \dnc(M_1,V_1)\times_{\dnc(M,V)}\dnc(M_2,V_2)$$ is a diffeomorphism.
\end{enumerate}

\end{enumerate}
\end{prop}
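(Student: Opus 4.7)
The plan is to prove the four assertions in order, each reducing to the previous.

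For (1a), at a pair of points $n_i \in N^{M_i}_{V_i}$ with common image $n \in N^M_V$ (say $n_i$ above $v_i \in V_i$ and $n$ above $v \in V$), I would use the Atiyah-type short exact sequences
\[
0 \to N^{M_i}_{V_i}|_{v_i} \to T_{n_i} N^{M_i}_{V_i} \to T_{v_i} V_i \to 0
\]
and the analogous one for $N^M_V$. The tangent map $T(Nf_i)$ defines a morphism between these sequences: it restricts to $d(f_i|_{V_i})$ on the base and to the induced map $N^{M_i}_{V_i}|_{v_i} \to N^M_V|_v$ on the fiber. Hypothesis (3) gives transversality of the base rows directly, and hypothesis (2) descends (via the quotient $TM/TV$) to the required surjectivity on the fiber rows; a short diagram chase then yields transversality of $T(Nf_1)$ and $T(Nf_2)$.

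For (1b), by (1a) both sides are smooth manifolds. The natural map sends $[Z] \mapsto ([p_1 Z],[p_2 Z])$ where $p_i$ are the projections from $T(M_1 \times_M M_2)$, and is fiberwise linear over the identity on $V_1 \times_V V_2$; so it suffices to check bijectivity on each fiber. Injectivity uses that $f_i(V_i) \subseteq V$ together with hypothesis (3). Surjectivity is the substantive step: starting from $([X_1],[X_2])$ with $df_1(X_1)-df_2(X_2) \in T_v V$, I would use hypothesis (3) to write $df_1(X_1)-df_2(X_2) = d(f_1|_{V_1})(Y_1) - d(f_2|_{V_2})(Y_2)$ with $Y_i \in T_{v_i} V_i$, and then replace $X_i$ by $X_i \mp Y_i$ to obtain a lift in $T(M_1 \times_M M_2)|_{V_1 \times_V V_2}$.

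For (2a), transversality of $\dnc(f_1),\dnc(f_2)$ on the open dense stratum $t \neq 0$ is identical to that of $f_1, f_2$, which is hypothesis (2). On the stratum $t=0$, the tangent space of $\dnc(M_i, V_i)$ at a point of $N^{M_i}_{V_i} \times \{0\}$ splits compatibly into a piece tangent to $N^{M_i}_{V_i}$ (handled by (1a)) and a $\partial_t$ direction (handled by the observation that $d\pi_{\R}$ is nonzero on both sides). For (2b), bijectivity is clear: at $t \neq 0$ both sides equal $(M_1 \times_M M_2) \times \{t\}$, and at $t=0$ it is precisely (1b). Smoothness away from $t=0$ is obvious, and smoothness up to $t=0$ I would verify by composing with the generators of the DNC smooth structure recalled in \cref{dnc section}: the functions $\pi_M$, $\pi_\R$, and $\dnc(f)$ for smooth $f$ vanishing on the relevant submanifold all pull back correctly under the candidate diffeomorphism.

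The main obstacle will be the surjectivity step in (1b), where all three transversality hypotheses intervene simultaneously; once it is established, (2b) becomes a routine check against the generators of the DNC smooth structure.
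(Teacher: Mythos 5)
The paper does not actually prove this proposition: it is recalled verbatim from Debord--Skandalis \cite{Debord:2017aa}, so there is no in-text argument to compare yours against. Your outline is the standard one and the linear algebra is correct: the exact sequences $0\to N^{M_i}_{V_i}|_{v_i}\to T_{n_i}N^{M_i}_{V_i}\to T_{v_i}V_i\to 0$ do reduce (1a) to hypothesis (3) on the quotient and to hypothesis (2) taken modulo $T_vV$ on the sub, and the surjectivity step you isolate in (1b) --- correcting $(X_1,X_2)$ by a pair $(Y_1,Y_2)\in T_{v_1}V_1\times T_{v_2}V_2$ with $d(f_1|_{V_1})Y_1-d(f_2|_{V_2})Y_2=df_1X_1-df_2X_2$ --- is exactly where hypothesis (3) enters. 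A fiberwise-linear smooth bijection of vector bundles over the identity is an isomorphism, so (1b) closes correctly.

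The one place to tighten is the end of (2b). A smooth bijection that restricts to a diffeomorphism on each of the strata $\{t\neq 0\}$ and $\{t=0\}$ need not be a diffeomorphism (consider $x\mapsto x^3$ on $\R$ stratified by $\{0\}$ and $\R^*$), so ``bijectivity plus smoothness of the forward map'' does not finish the argument: you must also control the inverse, equivalently the differential in the direction transverse to the $t=0$ stratum. Two clean completions: (i) show the differential of the natural map is bijective at every point of the $t=0$ stratum by the same two-row chase you use for (2a) (sub = tangent to the stratum, where (1b) gives bijectivity; quotient = $T_0\R$, where the map is the identity because both sides commute with $\pi_\R$), and then invoke the inverse function theorem together with global bijectivity; or (ii) test the generators of the smooth structure of $\dnc(M_1\times_MM_2,V_1\times_VV_2)$ recalled in \cref{dnc section} against the \emph{inverse} map, which requires observing that, locally, any smooth function vanishing on $V_1\times_VV_2$ is a $C^\infty(M_1\times_MM_2)$-combination of pullbacks of functions vanishing on $V_1$ or on $V_2$ --- a point where the transversality hypotheses are used once more. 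Either completion is routine, but it is a genuine step rather than a formality, and your write-up as it stands only addresses the forward direction.
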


In the next proposition and for the rest of this article we use the notion of Lie groupoids, $\mathcal{VB}$-groupoids  and invariant operators on Lie groupoids.  We refer the reader to \cite{MR584266,MR896907} for more details on Lie groupoids and their $C^*$-algebras and to \cite{MR941624,MR896907} for more details on $\mathcal{VB}$-groupoids. There exists also a recent survey on pseudo-differential operators on Lie groupoids \cite{ClaireGeorges}. Let $G\rightrightarrows G^0$ be a Lie groupoid. We use the notation $s$ and $r$ for the source and range and for any $x\in G^0$, we use $G_x:=s^{-1}(\{x\})$. We denote the Lie algebroid of $G$ by $\mathfrak{A}G$. We also denote the anchor map by $\natural$.

\begin{theorem}\label{Construction of dnc grouopids}Let $G$ be a Lie groupoid, $H$ a Lie subgroupoid. Then
\begin{enumerate}
\item the space $N^G_H\rightrightarrows N^{G^0}_{H^0}$ is a Lie groupoid whose structure maps are $Ns$, $Nr$ and whose Lie algebroid is equal to $N^{\mathfrak{A}G}_{\mathfrak{A}H}.$ Furthermore, $N^G_H$ is a $\mathcal{VB}$-groupoid over $H$.
\item  the manifold $\dnc(G,H)\rightrightarrows\dnc(G^0,H^0)$ is a Lie groupoid whose structure maps are $\dnc(s)$, $\dnc(r)$ and Lie algebroid is equal to $\dnc(\mathfrak{A}G,\mathfrak{A}H)$.
\end{enumerate}
\end{theorem}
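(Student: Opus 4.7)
The strategy is to apply $N$ and $\dnc$ functorially to the groupoid structure maps of $G$ and use the fibered product proposition to make sense of composition. First, the source and target $s,r:G\to G^0$ are submersions whose restrictions $s|_H,r|_H:H\to H^0$ are also submersions (since $H$ is a Lie subgroupoid); hence by the Functoriality of DNC proposition, $Ns,Nr:N^G_H\to N^{G^0}_{H^0}$ and $\dnc(s),\dnc(r):\dnc(G,H)\to\dnc(G^0,H^0)$ are smooth submersions. The unit $u:G^0\to G$ sends $H^0$ into $H$, and the inverse $i:G\to G$ sends $H$ into $H$, so $Nu,Ni$ and $\dnc(u),\dnc(i)$ are defined by functoriality.

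For composition, the set of composable arrows $G^{(2)}=G\times_{s,r}G$ is a transverse fibered product, and the analogous transverse fibered product structure holds for $H^{(2)}\subseteq G^{(2)}$ because $s|_H,r|_H$ are themselves transverse submersions. The fibered product proposition then yields natural diffeomorphisms
$$N^{G^{(2)}}_{H^{(2)}}\cong N^G_H\times_{N^{G^0}_{H^0}}N^G_H, \qquad \dnc(G^{(2)},H^{(2)})\cong\dnc(G,H)\times_{\dnc(G^0,H^0)}\dnc(G,H),$$
under which $Nm$ and $\dnc(m)$ become well-defined smooth multiplication maps. The groupoid axioms (associativity, unit law, inverse law) are commutative diagrams of smooth maps on $G$ whose sides send the $H$-data into itself; the functoriality of $N$ and $\dnc$ with respect to composition and identity transports these diagrams verbatim, so the axioms hold for both $N^G_H$ and $\dnc(G,H)$. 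The VB-groupoid structure of $N^G_H$ over $H$ is immediate: each $Nf$ is fiberwise the linear map induced by the differential of $f$, hence a vector-bundle morphism.

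The remaining point is the identification of the Lie algebroid. Recall that for any Lie groupoid $\Gamma\rightrightarrows\Gamma^0$, the Lie algebroid $\mathfrak{A}\Gamma$ is canonically identified with the normal bundle $N^\Gamma_{\Gamma^0}$ (using that $u$ is a section of $s$). Applied to $N^G_H\rightrightarrows N^{G^0}_{H^0}$, whose unit map is $Nu$, this gives $\mathfrak{A}(N^G_H)=N^{N^G_H}_{N^{G^0}_{H^0}}$. Using that normal bundles commute with the nested inclusion $G^0\subseteq H\subseteq G$ (equivalently, passing to second-order jets along $H^0$), this double normal bundle is canonically $N^{\mathfrak{A}G}_{\mathfrak{A}H}$. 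The same argument, with $\dnc$ in place of $N$ and the explicit smooth structure from Section~\ref{dnc section}, identifies the algebroid of $\dnc(G,H)$ with $\dnc(\mathfrak{A}G,\mathfrak{A}H)$.

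The hardest step is this last identification: one must verify that the natural iterated construction $N(N(\cdot))$ (respectively $\dnc$ of a pair whose ambient is itself a $\dnc$) agrees with the single-step normal bundle (respectively deformation) of the algebroid pair. Everything else is formal, reducing to careful bookkeeping with the functoriality and fibered product propositions.
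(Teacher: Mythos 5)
Your approach matches what the paper's one-line proof alludes to: build the structure maps of $N^G_H$ and $\dnc(G,H)$ by applying $N$ and $\dnc$ functorially to those of $G$, and use Proposition~\ref{DNC fibered product} to make the composable-pairs manifold and hence the multiplication work. Checking the submersion and transversality hypotheses for $s,r$ and their restrictions to $H$, identifying $N^{G^{(2)}}_{H^{(2)}}$ and $\dnc(G^{(2)},H^{(2)})$ with the relevant fibered products, and transporting the groupoid axioms by functoriality are all correct and are exactly the consequences of Propositions~\ref{Functoriality of DNC} and~\ref{DNC fibered product} the paper invokes.

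The one point that needs correcting is your description of the Lie algebroid identification. You invoke commutativity of normal bundles ``with the nested inclusion $G^0\subseteq H\subseteq G$'', but $G^0$ is not contained in $H$: the units $G^0$ and the subgroupoid $H$ are two submanifolds of $G$ meeting (cleanly, transversally along the units) in $H^0$. The relevant organizing picture is the square of inclusions $H^0\subseteq H$, $H^0\subseteq G^0$, $H\subseteq G$, $G^0\subseteq G$, for which the canonical exchange isomorphism of iterated normal bundles reads
\begin{equation*}
N^{N^G_H}_{N^{G^0}_{H^0}}\;\cong\;N^{N^G_{G^0}}_{N^H_{H^0}}\;=\;N^{\mathfrak{A}G}_{\mathfrak{A}H},
\end{equation*}
and likewise for $\dnc$. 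Combining this with the canonical identification $\mathfrak{A}\Gamma\cong N^{\Gamma}_{\Gamma^0}$ (via the unit section of $s$), as you do, yields the claim. So the conclusion and the overall strategy are right; only the bookkeeping at this step must be done with a square of submanifolds rather than a chain, since the chain you wrote is false in general.
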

\begin{proof}
Statements $1$ and $2$ are direct consequences of propositions \ref{Functoriality of DNC} and  \ref{DNC fibered product}.
\end{proof}
From now on, for a Lie groupoid $G$ and a Lie subgroupoid $H$, we will use $\mathcal{N}^G_H$ to denote the space $N^G_H$ equipped with the structure of a Lie groupoid given by \Cref{Construction of dnc grouopids}.
\begin{rem}\label{sections of dnc algebr}\label{trivializing dnc Rn}\label{metric dnc E}
 Let $E\to M$ be a vector bundle, $V\subseteq M$ a submanifold, $F\to V$ a subbundle of the restriction of $E$ to $V$. By \Cref{Construction of dnc grouopids}, the space $\dnc(E,F)$ is a vector bundle over $\dnc(M,V).$ Since a section of $\dnc(E,F)$ is determined by its values on the dense set $M\times ]0,1]$. It follows that $$\Gamma(\dnc(E,F))=\{X\in \Gamma(E\times \R):X_{|V\times\{0\}}\in \Gamma(F)\},$$ where $\Gamma$ denotes the set of global smooth sections. 
 
 In the particular case where $F$ is the zero bundle, it is clear that by dividing by $t$, we have an isomorphism from $\dnc(E,V)$ to $\pi_M^*E$ where $\pi_M:\dnc(M,V)\to M$ is the projection map. It follows that to a Euclidean metric on $E$, one associates canonically a Euclidean metric on $\dnc(E,V).$ 
 
 Moreover, the vector bundles $\dnc(E,V)^*$ and $\dnc(E^*,V)$ are canonically isomorphic by the isomorphism \begin{align*}
\dnc(E^*,V)\to \dnc(E,V)^*\\
\alpha \to \left(e\to \frac{1}{t^2}\alpha(e)\right) \;\text{for}\; t\neq 0.  
\end{align*}
\end{rem}
\begin{examps}\label{examps dnc}\begin{enumerate}
\item If $M$ is a smooth manifold, then $$\dnc(M\times M,M)=M\times M\times ]0,1]\sqcup TM\times \{0\}\rightrightarrows M\times \R$$ is the tangent groupoid constructed by A. Connes \cite{MR1303779}. The product law is given by $$(x,y,t)\cdot (y,z,t)=(x,z,t),\; (x,X,0)\cdot (x,Y,0)=(x,X+Y,0).$$

\item Let $L\subseteq G^0$ be a submanifold. Since $N^G_{L}$ is equal to $N^{G^0}_L\oplus \ker(ds)_{|L}$. It follows that the groupoid $\mathcal{N}^G_{L}\rightrightarrows N^{G^0}_L$ is equal to $$\{(X,A,Y):X,Y\in N^{G^0}_L,A\in \mathfrak{A}G, Y-X=\natural(A)\mod TL\},$$with the structural maps \begin{align*}
 s(X,A,Y)=Y,\, r(X,A,Y)=X,\; (X,A,Y)\cdot (Y,B,Z)=(X,A+B,Z).
\end{align*}
\end{enumerate}
\end{examps}
\begin{rem} \label{base dnc rem}It is clear that in the definition of $\dnc$, the manifold $[0,1]$ could be replaced by $\R$ or equally well by $[-1,1]$. \end{rem}In the rest of this section, we will use $\dnc_\R(M,V)=M\times\R^*\sqcup N^M_V\times \{0\}$.

The group $\R^*$ acts on the manifold $\dnc_\R(M,V)$ by the action $$s\cdot (x,t)=(x,ts),\quad s\cdot (x,X,0)=(x,\frac{X}{s},0).$$ This action is free and proper on the open subset $\dnc_\R(M,V)\backslash V\times \R$. The quotient is the classical blowup in differential geometry and is denoted by $\blup(M,V)$.

If $f:M\to M'$ is a smooth map, $V\subseteq M$ and $V'\subseteq M'$ submanifolds such that $f(V)\subseteq V'$. The map $\dnc(f):\dnc_\R(M,V)\to \dnc_\R(M',V')$ is $\R^*$-invariant but it doesn't always descend to the blowup. It only descends when the map $\dnc(f)$ is restricted to the $\R^*$-invariant subset $\dnc(f)^{-1}(V'\times \R)^c$. The quotient $\dnc(f)^{-1}(V'\times \R)^c/\R^*$ is denoted by $\blup_f(M,V)$. Here $^c$ denotes the complement set. It follows that one has a smooth map $$\blup(f):\blup_f(M,V)\to \blup(M',V').$$

The following theorem follows from the naturality construction of $\blup(f)$ and is proved in \cite{Debord:2017aa}. \begin{theorem}[\cite{Debord:2017aa}]\label{DS blowup}
Let $H\subseteq G$ be a Lie subgroupoid. The intersection $$\blup_{r,s}(G,H):=\blup_r(G,H)\cap \blup_s(G,H)\rightrightarrows \blup(G^0,H^0)$$ is a Lie groupoid with structure maps $\blup(r)$ and $\blup(s)$. Its Lie algebroid is $\blup_{\pi}(\mathfrak{A}G,\mathfrak{A}H)$, where $\pi:\mathfrak{A}G\to G^0$ is the projection map.
\end{theorem}
\section{A Preliminary result}\label{Basic lemma}
We will deduce the properties of Witten's deformation ultimately using the following simple proposition.

\begin{prop}\label{lem manifolds Witten}
Let $W$ be a complete Riemannian manifold, $\#:T^*W\to TW$ the musical isomorphism given by the Riemannian metric, $\alpha$ a $1$-form on $W$ such that \begin{enumerate}
\item the form $d\alpha$ is bounded
\item the section of $\End(\Lambda_\C T^*W)$ given by $\mathcal{L}_{\alpha^{\#}}+\mathcal{L}_{\alpha^{\#}}^*$ is bounded
\item $\norm{\alpha}$ is a proper function,
\end{enumerate}
then the operator $d+d^*+c(\alpha)$ acting on $L^2(\Lambda_\C T^*W)$ is a self-adjoint elliptic operator with compact resolvent, where $L^2(\Lambda_\C T^*W)$ is the Hilbert space of $L^2$ sections of $\Lambda_\C T^*W$.
\end{prop}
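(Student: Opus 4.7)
The plan is to analyse the square $P^2$, where $P:=d+d^*+c(\alpha)$, reduce self-adjointness to Chernoff's finite propagation theorem, and deduce compactness of the resolvent from a confining-potential argument. Writing $c(\alpha)=\varepsilon(\alpha)+\iota_{\alpha^\#}$ and using $\varepsilon(\alpha)^2=\iota_{\alpha^\#}^2=0$ together with $\{\varepsilon(\alpha),\iota_{\alpha^\#}\}=\norm{\alpha}^2$ gives $c(\alpha)^2=\norm{\alpha}^2$. For the cross term $\{d+d^*,c(\alpha)\}$, Cartan's formula gives $\{d,\iota_{\alpha^\#}\}=\mathcal{L}_{\alpha^\#}$, the graded Leibniz rule gives $\{d,\varepsilon(\alpha)\}=\varepsilon(d\alpha)$, and taking adjoints gives $\{d^*,\varepsilon(\alpha)\}=\mathcal{L}_{\alpha^\#}^*$ and $\{d^*,\iota_{\alpha^\#}\}=\iota_{d\alpha}$. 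Summing, on $C_c^\infty(\Lambda_\C T^*W)$,
\[
P^2=(d+d^*)^2+\norm{\alpha}^2+B,\qquad B:=\varepsilon(d\alpha)+\iota_{d\alpha}+\mathcal{L}_{\alpha^\#}+\mathcal{L}_{\alpha^\#}^*,
\]
and $B$ is a bounded zeroth-order operator by hypotheses (1) and (2). Note in particular that the two first-order pieces of $\mathcal{L}_{\alpha^\#}+\mathcal{L}_{\alpha^\#}^*$ cancel at the level of principal symbols, so this combination is genuinely zeroth order, which is exactly what makes hypothesis (2) sensible.

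For self-adjointness and ellipticity, the key observation is that $P$ and $d+d^*$ share the same principal symbol (Clifford multiplication), since $c(\alpha)$ is of order zero; hence $P$ is elliptic with propagation speed at most one. As $P$ is formally self-adjoint and $W$ is complete, Chernoff's theorem gives essential self-adjointness of $P$ on $C_c^\infty(\Lambda_\C T^*W)$.

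For compactness of the resolvent, the plan is to show that the unit ball of $\dom(\overline{P})$ in the graph norm is relatively compact in $L^2$. Testing the above identity against $\xi\in C_c^\infty$ and using $|\langle B\xi,\xi\rangle|\le\norm{B}_\infty\norm{\xi}^2$ gives
\[
\norm{(d+d^*)\xi}^2+\int\norm{\alpha}^2\,|\xi|^2\le\norm{P\xi}^2+\norm{B}_\infty\norm{\xi}^2.
\]
Given a graph-bounded sequence, the first term on the left provides $H^1_{\mathrm{loc}}$ bounds by interior elliptic regularity for $d+d^*$ together with cut-offs, while the second, combined with properness of $\norm{\alpha}$, yields tightness $\int_{\norm{\alpha}\ge R}|\xi|^2\le O(R^{-2})$ with $\{\norm{\alpha}\le R\}$ compact. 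Rellich's theorem on these compact sublevel sets, together with this tightness estimate, then yields an $L^2$-convergent subsequence by a diagonal extraction. The subtlest point is really self-adjointness: one cannot assume bounded geometry on $W$, and $c(\alpha)$ is unbounded, so a Kato--Rellich perturbation argument is unavailable; Chernoff's theorem is the clean way around this, which is precisely why the hypotheses of the proposition are phrased in terms of the subprincipal terms appearing in $P^2$.
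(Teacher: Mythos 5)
Your proof is correct, and the preliminary steps (the Clifford identity $c(\alpha)^2=\norm{\alpha}^2$, the Cartan/Leibniz computation of the anticommutator $[d+d^*,c(\alpha)]=\varepsilon(d\alpha)+\iota_{(d\alpha)^\#}+\mathcal{L}_{\alpha^\#}+\mathcal{L}_{\alpha^\#}^*$, and Chernoff's theorem for essential self-adjointness) coincide with the paper's. Where you diverge is in the compactness argument: you test the Bochner-type identity $P^2=(d+d^*)^2+\norm{\alpha}^2+B$ against $\xi$, extract $H^1_{\mathrm{loc}}$ bounds plus tightness from properness of $\norm{\alpha}$, and conclude via Rellich and a diagonal extraction. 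The paper instead argues purely operator-theoretically: it observes $(1+(d+d^*)^2+c(\alpha)^2)^{-1}\leq (1+(d+d^*)^2)^{-1}$ and $\leq(1+\norm{\alpha}^2)^{-1}$, factors the resolvent through Pedersen's comparison lemma as $a(1+(d+d^*)^2)^{-1/4}(1+\norm{\alpha}^2)^{-1/4}b$ with $a,b$ bounded, identifies the middle factor as a compact operator (local compactness of the elliptic resolvent against a $C_0$ function), and finally passes from $(1+(d+d^*)^2+c(\alpha)^2)^{-1}$ to $(1+P^2)^{-1}$ by a resolvent identity using boundedness of $B$. The two routes are genuinely different: yours is the natural ``hard'' PDE argument on $L^2$, transparent and self-contained, but intrinsically tied to the Hilbert-space setting (Rellich compactness and subsequence extraction have no $C^*$-module analogue). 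The paper's ``soft'' version is precisely what is needed because \cref{lem manifolds Witten} is in fact stated only as motivation for \cref{keyprop witten}, its Lie-groupoid/$C^*$-module generalization, and only the operator-inequality proof carries over verbatim to that setting. Your last paragraph correctly identifies essential self-adjointness (no Kato--Rellich, since $c(\alpha)$ is unbounded) as the delicate point, but in fact compactness of the resolvent is where the two proofs part ways methodologically.
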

Here $\norm{\alpha}$ is the function on $W$ which sends a point $x$ to $\norm{\alpha_x}.$ Similarly for $d\alpha$ and $d^*\alpha$.

 In fact we will need the Lie groupoid version of \Cref{lem manifolds Witten}. Before stating the extension to Lie groupoids we will need some classical results for Lie groupoids. We outline the proof of each for the reader's convenience.
 
 Let $g$ be a Euclidean metric on the bundle $\mathfrak{A}G\to G^0$. For every $\gamma\in G$, one has the isomorphism $$T_\gamma G_{s(\gamma)}\xrightarrow{d_{\gamma}R_{\gamma^{-1}}}T_{r(\gamma)}G_{r(\gamma)}=\mathfrak{A}_{r(\gamma)}G,$$ where $R_{\gamma^{-1}}$ denote right multiplication by $\gamma^{-1}$. It follows that $g$ defines a Riemannian metric on $G_x$ for every $x\in G^0$.
  The metric $g$ is called complete if the induced metric on every $G_x$ is complete.
  \begin{prop}\label{Euclidean metric groupoids}There exists a Euclidean metric $g$ on $\mathfrak{A}G$ such that for every $x\in G^0$, the induced Riemannian metric on $G_{x}$ is complete.
\end{prop}
\begin{proof}
Let $g$ be any Euclidean metric on $\mathfrak{A}G$, and let $h:G^0\to ]0,+\infty[$ be a smooth function such that if $x\in G^0$, then the ball in $G_x$ of radius $h(x)$ with center $x$ is relatively compact. It is straightforward to verify that the euclidean metric $\frac{1}{h^2}g$ is complete. See \cite{MR0133785} for more details.
\end{proof}
  \begin{rem}\label{rem compete metric compact}A consequence of the \cref{Euclidean metric groupoids} is that if $G^0$ is compact, then every Euclidean metric on $\mathfrak{A}G$ is complete.
  \end{rem}
Extension of Chernoff's theorem  \cite{MR0369890} to Lie groupoids in the case where $G^0$ is compact was done in \cite{MR2227132} and in the case of foliation Lie groupoids in \cite{MR1050489}. We present here an elementary proof for the general case.
  \begin{prop}\label{D regular groupoid}
Let $G\rightrightarrows G^0$ be a Lie groupoid, $g$ a complete Euclidean metric on $\mathfrak{A}G$, $E\to G^0$ a Hermitian vector bundle, $D$ a symmetric first order $G$-invariant differential operator on $G$ acting on $r^*E$, $c:G^0\to \R$ the function $$c(x)=\sup_{v\in \mathfrak{A}G_x^*:\norm{v}=1}\norm{\sigma(D)(x,v)}.$$ Here $\sigma$ is the principal symbol. If $c$ is bounded above, then the closure of $D$ is a regular self adjoint operator acting on $C^*(E)$.
\end{prop}
\begin{proof}
Let $f\in\Gamma_c\left(r^*\left(E\otimes |\Lambda|^\frac{1}{2}\mathfrak{A}G\right)\right)$. We recall that throughout this article $|\Lambda|^\alpha$ denotes the bundle of $\alpha$-densities. Consider the differential equation $$\partial_t u(\gamma,t)=iDu(\gamma,t),\; u(\gamma,0)=f(\gamma),\quad (\gamma,t)\in G\times \R.$$ By the classical theory of linear differential equations a unique $C^\infty$ solution to this equation exists locally. By Chernoff's theorem \cite{MR0369890} and our assumptions, we deduce that a solution exists globally on $G_x$ for each $x$. In particular solutions to this equation exist globally on $G$. Furthermore the distribution kernel associated to this equation is proper for each fixed $t$. Let \begin{align*}
 V_t:\Gamma_c\left(r^*\left(E\otimes |\Lambda|^\frac{1}{2}\mathfrak{A}G\right)\right)\to\Gamma_c\left(r^*\left(E\otimes |\Lambda|^\frac{1}{2}\mathfrak{A}G\right)\right),\quad f\to u(\cdot,t)
\end{align*}
be the convolution to the left by the distribution kernel. If $f,g\in \Gamma_c\left(r^*\left(E\otimes |\Lambda|^\frac{1}{2}\mathfrak{A}G\right)\right)$, then $$\frac{d}{dt}\langle V_tf,V_tg\rangle=\langle iDV_tf,V_tg\rangle+\langle V_tf,iDV_tg\rangle=\langle i(D-D^*)V_tf,V_tg\rangle=0.$$
Hence the operators $V_t$ extend to an isometry acting on the $C^*G$-module $C^*E$. This operator is adjointable (and therefore $C^*G$-linear) because of the equation $$\langle \xi ,V_t\eta\rangle=\langle V_{-t}\xi,\eta\rangle,$$ which proves as well that $V_t$ is a unitary in $\mathcal{L}(C^*E).$ The proposition follows then from \Cref{lemm to prove Chrnoff}. \end{proof}
\begin{prop}\label{lemm to prove Chrnoff}Let $S$ be a regular self adjoint operator acting on a $C^*$-module $E$, $V_t=\exp(itS)$, $T:\dom(T)\subseteq E\to E$ a $\C$-linear map with a dense domain. If\begin{enumerate}
\item  $V_t\dom(T)=\dom(T)$
\item  $T\subseteq S$.
\end{enumerate}Then the closure of $\graph(T)$ is equal to $\graph(S)$.
\end{prop}
\begin{proof}
By taking the closure of $T$, we can suppose that $T$ is closed. Let $f\in \mathcal{S}(\R)$ be a Schwartz function. Since $$f(S)=\int_{-\infty}^\infty \hat{f}(t)V_{2\pi t}dt,$$ it follows that $f(S)\dom(T)\subseteq \dom(T)$ and $Tf(S)=f(S)T$. Since $f(S)$ and $Sf(S)$ are bounded operators and $\dom(T)$ is dense, it follows that $\{(f(S)x,Sf(S)x):x\in E\}\subseteq \graph(T).$ 

Let $0\leq f_n\leq 1$ be Schwartz functions such that $f_n\to 1$ uniformly on every compact. It follows that $f_n(S)$ strongly converges to the identity. In particular if $x\in \dom(S)$, then $f_n(S)x\to x$, and $Sf_n(S)x\to Sx$. Hence $(x,Sx)\in \graph(T)$, which implies that $S=T$.
\end{proof}
\begin{prop}[\cite{MR2227132}]\label{local compact reolvent}Under the same hypothesis as \Cref{D regular groupoid}, if furthermore $D$ is an elliptic operator, then for every $f\in C_0(G^0)$ and $g\in C_0(\R)$, the operator $g(D)f$ is compact in the sense of $C^*$-modules.
\end{prop}
\begin{proof}
By a density argument it is enough to prove the proposition for $f\in C_c^\infty(G)$, and $g\in C_c(\R)$. Let $Q$ be a parametrix for $D^2$, that is $D^2Q=1+R$ with $R$ a $G$-pseudo differential operator of order $\leq -1$. The support of $Q$ can be chosen to be a subset of an  arbitrary open neighbourhood of $G^0$. Since $D^2$ is a differential operator, its Schwartz kernel is supported in a subset of $G^0$. In particular the support of $R$ can be choosen as well to be a subset of an arbitrary neighbourhood of $G^0$. We choose the supports of $R$ and $Q$ so that $Qf$ and $Rf$ are $G$-invariant pseudodifferential operators with compact support. It follows from \cite[theorem 18]{MR2227132}, that $Qf$ and $Rf$ extend to compact operators on $C^*(E)$. It follows from the identity \begin{equation*}
(1+D^2)^{-1}f=Qf-(1+D^2)^{-1}Rf+(1+D^2)^{-1}Qf,
\end{equation*}
that $(1+D^2)^{-1}f$ is compact. Since $g(x)(1+x^2)$ is bounded, it follows that $g(D)f$ is compact as well.\end{proof}
We recall the following extension of a classical result to Lie groupoids. If $X\in \Gamma^\infty_c(\mathfrak{A}G)$, then the operator $\mathcal{L}_X:\Gamma^\infty_c(\Lambda_\C\mathfrak{A}G^*)\to \Gamma^\infty_c(\Lambda_\C\mathfrak{A}G^*)$ is defined by Cartan's formula $$\mathcal{L}_X=di_X+i_Xd.$$ Here $i_X$ is the interior product acting on differential forms. The operator $\mathcal{L}_X$ is a $G$-operator in the sense of \cite{MR1687747,MR2227132}. We then have \begin{prop}\label{LX+LX* is 0 order alg}The operator $\mathcal{L}_X+\mathcal{L}_X^*$ is $C^\infty(G^0)$-linear (i.e. a $0$-order $G$-differential operator).
\end{prop}
\begin{rem}\label{adjoint of LX rem}Notice that the adjoint of $L_X$ is the adjoint in the sense of $G$-operators. The metric $g$ induces a $G$-invariant Riemannian metric on each $G_x$ for $x\in G^0$ and $X$ induces a $G$-invariant vector field on each $G_x$. The dual $\mathcal{L}_X^*$ is the $G$-invariant dual on each $G_x$.
\end{rem}
\begin{proof}
The operator $\mathcal{L}$ is an ungraded derivation. Therefore, $$\mathcal{L}_X(f\alpha)=(\mathcal{L}_Xf)\alpha+f\mathcal{L}_X\alpha,$$ where $f:G^0\to \R$ is a real valued smooth function and $\alpha\in \Gamma_c^\infty(\Lambda_\C \mathfrak{A}^*G)$. Taking the dual one deduces that $$f\mathcal{L}_X^*(\alpha)=(\mathcal{L}_Xf)\alpha+\mathcal{L}_X^*(f\alpha).$$  Therefore \begin{equation*}
 \mathcal{L}_X(f\alpha)+\mathcal{L}_X^*(f\alpha)=f\left(\mathcal{L}_X(\alpha)+\mathcal{L}_X^*(\alpha)\right).\qedhere
\end{equation*}
\end{proof}
\begin{theorem}\label{keyprop witten}Let $G$ be a Lie groupoid, $g$ a complete euclidean metric on $\mathfrak{A}G$, $\#:\mathfrak{A}G^*\to \mathfrak{A}G$ the musical isomorphism given by $g$, $\alpha\in \Gamma^\infty(\mathfrak{A}G^*)$. If \begin{enumerate}
\item the form $d\alpha$ is bounded
\item the section of $\End(\Lambda_\C \mathfrak{A}G^*)$ given by $\mathcal{L}_{\alpha^{\#}}+\mathcal{L}_{\alpha^{\#}}^*$ is bounded
\item $\norm{\alpha}:G^0\to \R$ is a proper function,
\end{enumerate}
then the closure of the operator $d+d^*+c(\alpha)$ acting on the $C^*(G)$ Hilbert module $C^*\left(\Lambda_\C\mathfrak{A}G^*\right)$ is a regular self adjoint elliptic operator with compact resolvent in the sense of $C^*$-modules.
\end{theorem}
\begin{rem}Thanks to \cite{MR1435704} (see also \cite{MR715325,Lesch:2018aa}), \cref{keyprop witten} implies that the Kasparov product of $d+d^*$ seen as an element of $\KK^0(\Cl(\mathfrak{A}^*G),\C)$ and $c(\alpha)$ seen as an element of $\KK^0(\C,\Cl(\mathfrak{A}^*G))$ is the operator $d+d^*+c(\alpha)\in KK^0(\C,\C)$, where $\Cl$ denotes the complex Clifford algebra.
\end{rem}
\begin{proof}
Since $$\norm{\sigma\left(d+d^*+c(\alpha)\right)(x,v)}=\norm{\sigma\left(d+d^*\right)(x,v)}=\norm{v}_{g_x},$$ it follows that $d+d^*+c(\alpha)$ and $d+d^*$ are elliptic and from \cref{D regular groupoid} that the closure of $d+d^*+c(\alpha)$ and $d+d^*$ are regular self adjoint operators.

\bigskip

It follows from Cartan's formula that the graded commutator is equal to
\begin{align*}
[d,i_{\alpha^{\#}}]=\mathcal{L}_{\alpha^{\#}}.
\end{align*}

Since  \begin{align*}
[d,c(\alpha)]&=[d,\alpha \wedge]+[d,i_{\alpha^{\#}}]=d\alpha\wedge\cdot + \mathcal{L}_{\alpha^{\#}}.
\end{align*}Hence by the hypotheses of \cref{keyprop witten}\begin{align*}
 [d+d^*,c(\alpha)]=[d,c(\alpha)]+[d,c(\alpha)]^*=d\alpha \wedge \cdot +i_{(d\alpha)^\#}+\mathcal{L}_{\alpha^{\#}}+\mathcal{L}^*_{\alpha^{\#}}
\end{align*} is bounded, where $i_{(d\alpha)^\#}(\cdot)$ is the adjoint of $d\alpha \wedge \cdot$. Therefore the closure of $(d+d^*)^2+c(\alpha)^2=(d+d^*+c(\alpha))^2-[d+d^*,c(\alpha)]$ is a regular self adjoint operator.

\bigskip

By a classical inequality (see \cite{MR1325694}), one has \begin{align*}
& (1+(d+d^*)^2+c(\alpha)^2)^{-1}\leq (1+(d+d^*)^2)^{-1}\\&(1+(d+d^*)^2+c(\alpha)^2)^{-1}\leq (1+(c(\alpha))^2)^{-1}=(1+\norm{\alpha}^2)^{-1}
\end{align*}
It follows from \cite[proposition 1.4.5]{MR548006} that there exists $a,b\in \mathcal{L}(C^*\Lambda_\C\mathfrak{A}G^*)$ such that $$(1+(d+d^*)^2+c(\alpha)^2)^{-\frac{1}{2}}=a(1+(d+d^*)^2)^{-\frac{1}{4}},\quad (1+(d+d^*)^2+c(\alpha)^2)^{-\frac{1}{2}}=(1+\norm{\alpha}^2)^{-\frac{1}{4}}b.$$ Hence $$(1+(d+d^*)^2+c(\alpha)^2)^{-1}=a(1+(d+d^*)^2)^{-\frac{1}{4}}(1+\norm{\alpha}^2)^{-\frac{1}{4}}b.$$ Since by our assumptions $(1+\norm{\alpha}^2)^{-\frac{1}{4}}\in C_0(G^0)$. It follows that $$(1+(d+d^*)^2)^{-\frac{1}{4}}(1+\norm{\alpha}^2)^{-\frac{1}{4}}\in \mathcal{K}(C^*\Lambda_\C\mathfrak{A}G^*)$$Hence $(1+(d+d^*)^2+c(\alpha)^2)^{-1}$ is compact as well.

\bigskip

Since $[d+d^*,c(\alpha)]$ is bounded, and \begin{align*}
 &\left(1+\left(d+d^*+c\left(\alpha\right)\right)^2\right)^{-1}\\&=\left(1-\left(1+\left(d+d^*+c(\alpha)\right)^2\right)^{-1}[d+d^*,c(\alpha)]\right)\left(1+(d+d^*)^2+c(\alpha)^2\right)^{-1},
\end{align*}
it follows that $(1+(d+d^*+c(\alpha))^2)^{-1}$ is compact.
\end{proof}
\subsection{Completion}\label{sect comple}
 \begin{prop}\label{comple prop}
Let $G\rightrightarrows G^0$ be a Lie groupoid, $g$ a complete Euclidean metric on $\mathfrak{A}G$ and $\alpha\in \Gamma(\mathfrak{A}^*G)$ a $1$-form.

Let $U$ be a saturated open subset of $G^0$, $V:=U^c\subseteq G^0$ its complement, $h:G^0\to [0,+\infty[$ a bounded smooth positive function such that  \begin{enumerate}
\item for any $x\in V$, $\alpha(x)\neq 0$.
\item $h^{-1}(0)=V$ 
\item $\norm{\frac{dh}{h}}_g\norm{\alpha}_g$ is bounded on $U$, where $dh\in \Gamma(\mathfrak{A}^*G)$ is the composition of the De Rham derivative with the anchor map.
\end{enumerate}
Then \begin{enumerate}
\item if the pair $g$ and $\alpha$ satisfy the hypotheses of \cref{keyprop witten} on the Lie groupoid $G$, then pair $\frac{g}{h}$ and $\frac{\alpha}{h}$ satisfy the hypotheses of \cref{keyprop witten} on the Lie groupoid $G_{|U}:=s^{-1}(U)=r^{-1}(U)$.
\item If for some $\epsilon$, the set $h^{-1}([0,\epsilon])$ is compact, then the converse of $1$ holds.
\end{enumerate}
\end{prop}
\begin{proof}
The metric $\frac{g}{h}$ is complete because $h$ is bounded. This follows from the inclusion for any $x\in U$, $r>0$ $$B_\frac{g}{h}(x,r)\subseteq B_g(x,\frac{r}{\norm{h}_{\infty}}),$$
where $B_g(x,r)$ and $B_{\frac{g}{h}}(x,r)$ denotes the ball of radius $r$ with center $x$  in $G_x$ with respect to $g$ and $\frac{g}{h}.$

Let us verify the equivalence \begin{enumerate}
\item First one has $$\norm{d(\frac{\alpha}{h})}_{\frac{g}{h}}=h\norm{d(\frac{\alpha}{h})}_{g}=\norm{d\alpha -\frac{dh}{h}\alpha}_g.$$ Since $\norm{\frac{dh}{h}}_g\norm{\alpha}_g$ is bounded, it follows that $\norm{d(\frac{\alpha}{h})}_{\frac{g}{h}}$ is bounded if and only if $\norm{d\alpha}_g$ is bounded.
\item Let $X=\alpha^{\#_{g}}=\left(\frac{\alpha}{h}\right)^{\#_{\frac{g}{h}}}.$ The equivalence of the second condition follows from lemma \ref{lem 2} and the following inequality $$|\frac{dh(X)}{h}|\leq \norm{\frac{dh}{h}}_g\norm{X}_g=\norm{\frac{dh}{h}}_g\norm{\alpha}_g$$\begin{lem}\label{lem 2}Let $G\rightrightarrows G^0$ be  a Lie groupoid, $h:G^0\to ]0,+\infty[$ a smooth function, $g$ a Euclidean metric on $\mathfrak{A}G$ and $X\in \Gamma(\mathfrak{A}G^0)$. If the function $\frac{dh(X)}{h}$ is bounded, then the section $\mathcal{L}_X+\mathcal{L}_X^{*_g}$ is bounded if and only if the section $\mathcal{L}_X+\mathcal{L}_X^{*_{\frac{g}{h}}}$ is bounded.
\end{lem}
\begin{proof}
Recall \cref{adjoint of LX rem}, and let $x\in G^0$, $\tilde{h}:=h\circ r:G_x\to \R$, $\alpha,\beta\in \Gamma(\Lambda^k_\C T G_x^*)$ \begin{align*}
\int_{G_x}\langle\mathcal{L}_X \alpha,\beta\rangle_{\frac{g}{h}}d vol_{\frac{g}{h}}&=\int_{G_x}\tilde{h}^{k}\langle \mathcal{L}_X \alpha,\beta\rangle_g \tilde{h}^{-\frac{n}{2}}d vol_{g}\\&=\int_{G_x}\langle \alpha,\mathcal{L}_X^*\left(\tilde{h}^{k-\frac{n}{2}}\beta\right)\rangle_g dvol_g\\&=\int_{G_x}\langle\alpha,\tilde{h}^{\frac{n}{2}-k}\mathcal{L}_X^*\left(\tilde{h}^{k-\frac{n}{2}}\beta\right)\rangle_{\frac{g}{h}}dvol_{\frac{g}{h}}.
\end{align*}
It follows that $$\mathcal{L}_{X}^{*_{\frac{g}{h}}}=h^{\frac{n}{2}-k}\mathcal{L}_X^{*_g}h^{k-\frac{n}{2}}$$ on $\Gamma(\Lambda_\C^k\mathfrak{A}G).$

Notice that since the metrics $g$ and $\frac{g}{h}$ are conformal, one has $$\norm{\mathcal{L}_X+\mathcal{L}_X^{*_{\frac{g}{h}}}}_{g}=\norm{\mathcal{L}_X+\mathcal{L}_X^{*_{\frac{g}{h}}}}_{\frac{g}{h}}\in C^\infty(G^0).$$ Hence using the metric $g$ or $\frac{g}{h}$ is of no consequence on the boundness. It follows that to prove the proposition it suffices to prove that $\norm{\mathcal{L}_X^{*_g}-\mathcal{L}_{X}^{*_{\frac{g}{h}}}}_g$ is bounded. One has \begin{align*}
\norm{\mathcal{L}_X^{*_g}-\mathcal{L}_{X}^{*_{\frac{g}{h}}}}_g&=\norm{\mathcal{L}_X^{*_g}-h^{\frac{n}{2}-k}\mathcal{L}_X^{*_g}h^{k-\frac{n}{2}}}_g\\&=\norm{\mathcal{L}_X-h^{k-\frac{n}{2}}\mathcal{L}_Xh^{\frac{n}{2}-k}}_g\\&=|h^{k-\frac{n}{2}}\natural{X}(h^{\frac{n}{2}-k})|=|(\frac{n}{2}-k-1)\frac{dh(X)}{h}|.
\end{align*}
The lemma then follows.
\end{proof}
\item The last equivalence follows from the identity $$\norm{\frac{\alpha}{h}}_{\frac{g}{h}}^2=\frac{1}{h}\norm{\alpha}_g^2,$$and the fact that $\norm{\alpha}_g$ is non zero on $V$.
\end{enumerate}
\end{proof}
 \begin{rem}\label{rem V dim 1}suppose that $V:=U^c$ is a smooth submanifold of $G^0$. This implies that for any $x\in V$,  \begin{equation}
 \natural(\mathfrak{A}_xG)\subseteq T_xV
\end{equation}
If furthermore $V$ is of codimension $1$, and if $h\in C^\infty(G^0)$ is a smooth function such that locally if $G^0=\R^n$, $V=\R^{n-1}$, $h(x)=x_n^2$ for some local coordinates, then the section of $\mathfrak{A}G$ on $U$, $\frac{dh}{h}$ extends to a smooth section on $G^0$. This follows from the fact that $dh$ is zero on $V$.
\end{rem}
\section{Witten deformation}\label{section classical witten deformation}
\subsection{Morse function}\label{Classical Witten subsection 1}
Let $M$ be a closed manifold, $f:M\to \R$ a Morse function. We denote by $\crit(f)$ the set of its critical points (a finite set), and by $\pi_\R:\dnc(M,\crit(f))\to [0,1]$, $\pi_M:\dnc(M,\crit(f))\to M$ the natural projections. By \Cref{Construction of dnc grouopids}, the following is naturally a Lie groupoid \begin{align*}
G&=\dnc(M\times M,\crit(f)\times \crit(f))\\&=M\times M\times ]0,1]\sqcup_{a,b\in \crit(f)}T_aM\times T_bM\times\{0\}\rightrightarrows \dnc(M,\crit(f)),
\end{align*}
whose algebroid is equal to $\dnc(TM,\crit(f)).$ 

Let $g$ be a Riemannian metric on $M$. In \cref{metric dnc E}, on $\mathfrak{A}G=\dnc(TM,\crit(f))$ a Euclidean metric is defined which on $M\times\{t\}$ is equal to $\frac{g}{t^2}$ for $t\neq 0$ and the constant Riemannian metric $g_a$ on $T_aM\times\{0\}.$ This metric is complete by the completeness of the metric $g$ on $M$.

\bigskip

Let $\alpha$ be the $1$-form given by \cref{Functoriality of DNC} $$\alpha=\dnc(df):\dnc(M,\crit(f))\to \dnc(T^*M,\crit(f)).$$ After identifying $\dnc(T^*M,\crit(f))$ with $\dnc(TM,\crit(f))^*=\mathfrak{A}G^*$ (see \Cref{metric dnc E}), the form $\alpha$ is equal to $\frac{df}{t^2}$ on $M\times\{t\}$ for $t\neq 0$ and to $d^2_af$ on $T_aM\times\{0\}$ for $a\in\crit(f).$

\bigskip

Let us verify the condition of \Cref{keyprop witten}. \begin{enumerate}
 \item The form $\alpha$ is clearly closed.
 \item On $M\times\{t\}$, one has $$\alpha^{\#_{\frac{g}{t^2}}}=df^{\#_g},$$ where $\#$ is the musical isomorphism. Hence $\mathcal{L}_{\alpha^{\#}}$ is independent of $t$. Since the Riemannian metric is multiplied by a scalar, it follows that $\mathcal{L}_{\alpha^{\#}}^*$ doesn't depend on $t$ as well. In other words, the section $\mathcal{L}_{\alpha^{\#}}+\mathcal{L}_{\alpha^{\#}}^*$ on $\dnc(M,\crit(f))$ is the pullback of the section $\mathcal{L}_{df^{\#_g}}+\mathcal{L}_{df^{\#_g}}^*$ on $M$ using the projection map $\pi:\dnc(M,\crit(f))\to M$. Hence the norm of the section $\mathcal{L}_{\alpha^{\#}}+\mathcal{L}_{\alpha^{\#}}^*$ is bounded by its boundness on $M$.

\item On $M\times\{t\}$, one has $$\norm{\alpha}_{\frac{g}{t^2}}=\norm{\frac{df}{t^2}}_{\frac{g}{t^2}}=\frac{\norm{df}_g}{t}$$ and on $T_aM\times\{0\}$, $$\norm{\alpha}_{g_a}=\norm{d^2_af}_{g_a}.$$Hence $\norm{\alpha}$ is a proper function on the space $\dnc(M,\crit(f))$.

\end{enumerate}
\begin{cor}\label{cor wtitten compact resolvent M}The operator $d+d^*+c(\alpha)$ acting on the $C([0,1])$ module $C^*(\Lambda_\C\ker(d\pi_\R)^*)$ is a regular self adjoint operator with compact resolvent.
\end{cor}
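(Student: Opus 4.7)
The plan is simply to apply \Cref{keyprop witten} to the Lie groupoid $G = \dnc(M \times M, \crit(f))$, whose algebroid is $\mathfrak{A}G = \dnc(TM, \crit(f))$, equipped with the complete Euclidean metric and the $1$-form $\alpha = \dnc(df) \in \Gamma^\infty(\mathfrak{A}G^*)$ described immediately before the corollary. The three hypotheses of \Cref{keyprop witten} are precisely what was verified in the preceding enumerated list: $d\alpha = 0$ by closedness of $\dnc(df)$; the section $\mathcal{L}_{\alpha^{\#}} + \mathcal{L}_{\alpha^{\#}}^*$ is independent of $t$ for $t > 0$ via the rescaling argument, hence bounded by compactness of $M$; and $\|\alpha\|$ is proper on $\dnc(M, \crit(f))$ since $\|df\|_g / t$ explodes as $t \to 0^+$ outside critical points, while at $t = 0$ it restricts to the norm function $\|d_a^2 f\|$ on each $T_aM$.

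The only remaining point is to translate the conclusion of \Cref{keyprop witten} into the form stated in the corollary. First I would note that the submersion $\pi_\R : \dnc(M, \crit(f)) \to [0,1]$ exhibits $C^*(G)$ as a $C([0,1])$-algebra. Second, the bundle $\ker(d\pi_\R)$ over the unit space agrees fiber-by-fiber with $\mathfrak{A}G = \dnc(TM, \crit(f))$: at $(x,t)$ with $t > 0$ both are equal to $T_xM$, while at $(a, X, 0) \in T_aM \times \{0\}$ both are equal to $T_aM$. Combined with the canonical isomorphism $\mathfrak{A}G^* \cong \dnc(T^*M, \crit(f))$ from \Cref{metric dnc E}, the $C^*$-module $C^*(\Lambda_\C \mathfrak{A}G^*)$ becomes exactly the $C([0,1])$-module $C^*\Lambda_\C \ker(d\pi_\R)^*$ appearing in the statement.

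I do not expect any genuine obstacle: the analytic substance (regularity from Chernoff's theorem and compactness of the resolvent via the comparison of $(1 + (d+d^*)^2 + c(\alpha)^2)^{-1}$ with both $(1 + (d+d^*)^2)^{-1}$ and $(1 + \|\alpha\|^2)^{-1}$) is entirely packaged inside \Cref{keyprop witten}, and the geometric substance (properness of $\|\alpha\|$ and boundedness of the commutator terms) is exactly the content of the three-item verification performed just above. The corollary is thus a direct specialization of the preliminary proposition to the DNC groupoid.
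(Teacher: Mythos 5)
Your argument is correct and follows essentially the same route as the paper: apply \Cref{keyprop witten} to $G=\dnc(M\times M,\crit(f))$, whose hypotheses are exactly the three items verified just before the corollary, and then transfer the conclusion to the $C([0,1])$-module. The one step you state more loosely than the paper is the transfer: the $C^*G$-module $C^*\Lambda_\C\mathfrak{A}G^*$ and the $C([0,1])$-module $C^*\Lambda_\C\ker(d\pi_\R)^*$ are not literally equal (they live over different algebras) but correspond under the Morita equivalence between $G$ and the trivial groupoid $[0,1]\rightrightarrows[0,1]$ implemented by $\dnc(M,\crit(f))$ --- which is how the paper phrases it --- and your fiberwise identification of $\ker(d\pi_\R)$ with $\mathfrak{A}G$ is precisely the bundle-level ingredient of that equivalence.
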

\begin{proof}
The manifold $\dnc(M,\crit(f))$ gives naturally a Morita equivalence between the Lie groupoid $\dnc(M\times M,\crit(f)\times\crit(f))\rightrightarrows \dnc(M,\crit(f))$ and the trivial Lie groupoid $[0,1]\rightrightarrows [0,1]$. The corollary then follows from \Cref{keyprop witten}.
\end{proof}
\begin{cor}\label{cor witen}Let $d_t=e^{-\frac{f}{t}}de^{\frac{f}{t}}$, $\Delta_t=(d_t+d_t^*)^2$ be the Witten Laplacian acting on $L^2(\Lambda_\C T^*M)$. If $$\lambda^p_1(t)\leq \lambda^p_2(t)\cdots$$ denotes the spectrum of $\Delta_t$  acting on $p$-forms, then the function $$t\to \begin{cases}t\lambda^p_i(t)\quad\text{if}\;t\neq 0\\ \lambda_i^p(0)\quad \text{if} \;t=0\end{cases}$$ is continuous, where $\lambda_i^p(0)$ is the $i$'th eigenvalue of Harmonic osccilator $$\bigoplus_{a\in \crit(f)}\left(d+d^*+c(d^2_a(f))\right)^2:\bigoplus_{a\in \crit(f)}L^2(T_aM,\Lambda_\C^pT_aM)\to\bigoplus_{a\in \crit(f)} L^2(T_aM,\Lambda_\C^pT_aM),$$ where $L^2(T_aM,\Lambda_\C^p T_aM)$ is the set of all $L^2$ functions from $T_aM$ to $\Lambda_\C^p T_aM$, $d^2_af$ is the $1$-differential form on $T_aM$, and $c$ is the Clifford multiplication.
\end{cor}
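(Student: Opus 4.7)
The plan is to deduce the statement from \Cref{cor wtitten compact resolvent M} by identifying the spectrum of the fiber operator $D_t^2$, where $D = d + d^* + c(\alpha)$, with an explicit rescaling of the spectrum of the Witten Laplacian. The compactness of $(1+D^2)^{-1}$ on the $C([0,1])$-module of \Cref{cor wtitten compact resolvent M} translates, via the equivalence between Hilbert $C([0,1])$-modules and continuous fields of Hilbert spaces, into the statement that $\bigl((1+D_t^2)^{-1}\bigr)_{t\in[0,1]}$ is a norm-continuous field of positive compact self-adjoint operators on the fiber Hilbert spaces. The classical continuity of the eigenvalues of such a family then implies that the $i$-th eigenvalue $\mu_i^p(t)$ of $D_t^2$ restricted to $p$-forms is a continuous function of $t \in [0,1]$.

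At $t=0$, the fiber groupoid is $\bigsqcup_{a\in\crit(f)} T_aM\times T_aM$ and the $1$-form $\alpha$ restricts to $d^2_a f$ on each $T_aM$; hence $D_0 = \bigoplus_a \bigl(d + d^* + c(d^2_a f)\bigr)$, and $D_0^2$ is precisely the harmonic oscillator appearing in the statement, so $\mu_i^p(0) = \lambda_i^p(0)$.

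For $t > 0$, the fiber is the pair groupoid $M\times M$ with the Haar system induced by $g/t^2$, so the fiber Hilbert space is $L^2(\Lambda_\C T^*M, g/t^2)$. The identities $(df/t^2)^{\#_{g/t^2}} = (df)^{\#_g}$ and $d^{*_{g/t^2}} = t^2 d^*_g$ yield
\[
D_t \;=\; d_s + d_s^{*_{g/t^2}}, \qquad s = \tfrac{1}{t^2}, \qquad d_s := d + s\, df \wedge\cdot,
\]
and, using $d_s^2 = 0$ and $(d_s^{*_{g/t^2}})^2 = 0$,
\[
D_t^2 \;=\; \{d_s,\, d_s^{*_{g/t^2}}\} \;=\; t^2 \{d_s,\, d_s^{*_g}\} \;=\; t^2 (d_s + d_s^{*_g})^2.
\]
Since the paper's $d_\tau = e^{-f/\tau} d e^{f/\tau}$ equals $d_{1/\tau}$ in the present notation, the rightmost expression is exactly $t^2\Delta_{t^2}$ as a differential operator on $\Lambda_\C T^*M$. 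Because $D_t^2$ preserves form degree, the scalar isometry $U_t : L^2(g) \to L^2(g/t^2)$ from \Cref{metric dnc E} (multiplication by $t^{(n-2p)/2}$ on $p$-forms) commutes with this differential operator and provides a unitary equivalence between $D_t^2$ on $L^2(g/t^2)$ and $t^2 \Delta_{t^2}$ on $L^2(g)$; we conclude $\mu_i^p(t) = t^2\,\lambda_i^p(t^2)$ for every $t > 0$.

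Setting $s = t^2$, we obtain $s\,\lambda_i^p(s) = \mu_i^p(\sqrt{s})$ on $(0,1]$, which together with $\lambda_i^p(0) = \mu_i^p(0)$ at $s=0$ realises the function of the statement as the composition of the continuous function $\mu_i^p$ with $\sqrt{\cdot}$, hence continuous on $[0,1]$. The main technical point is the first paragraph, namely extracting the continuity of the individual eigenvalues from the abstract compact-resolvent statement of \Cref{cor wtitten compact resolvent M}; the remaining steps are direct calculations once the fiber Hilbert spaces and operators are identified correctly.
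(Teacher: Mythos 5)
Your proposal is correct and follows essentially the same route as the paper: the identification of the fiber operator at $t\neq 0$ with $t^2\Delta_{t^2}$ (via the metric rescaling $g/t^2$) combined with \Cref{cor wtitten compact resolvent M}, and the ``classical continuity of eigenvalues of a norm-continuous field of compact operators'' you invoke in the first paragraph is precisely the paper's \Cref{cont spectrum}, proved there by finite-rank approximation and the Weyl-type inequality $|\mu_i(T_1)-\mu_i(T_2)|\leq\norm{T_1-T_2}$. Your computation of $D_t^2=t^2(d_s+d_s^{*_g})^2$ and the substitution $s=t^2$ just make explicit what the paper states in one line.
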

\begin{proof}
After normalizing the metric $\frac{g}{t^2}$, the operator $(d+d^*+c(\alpha))^2$ on $M\times\{t\}$ is equal to $t^2\Delta_{t^2}.$ The corollary then follows from \Cref{cont spectrum}.\end{proof}\begin{lem}\label{cont spectrum}Let $E$ be a $C[0,1]$ module, $L\in \mathcal{K}(E)$ a compact operator, $\mu_i(t)$ denote the singular value of the operator $|L_t|$. The function $t\to \mu_i(t)$ is continuous.
\end{lem}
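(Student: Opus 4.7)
The plan is to deduce the continuity of $\mu_i$ from two separate semi-continuity estimates, both obtained from the Courant--Fischer min-max characterisation of singular values of a compact operator on a Hilbert space. The crucial external input is the standard fact that for a Hilbert $C[0,1]$-module $E$ and any $K\in\mathcal{K}(E)$, the function $t\mapsto\|K_t\|$ is continuous on $[0,1]$: for a rank-one module operator $\theta_{\xi,\eta}$ this reads $\|K_t\|=\|\xi_t\|\,\|\eta_t\|=\sqrt{\langle\xi,\xi\rangle(t)\langle\eta,\eta\rangle(t)}$; for a finite-rank module operator one computes $\|K_t\|^2$ as the largest eigenvalue of a fixed-size matrix of inner products of fibres, visibly continuous in $t$; and the general case follows by uniform approximation.

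For upper semi-continuity at $t_0$, I would use the Ky Fan identity
$$\mu_i(T)=\inf\{\|T-F\|:F\in\mathcal{B}(H),\ \rank F<i\}.$$
Given $\epsilon>0$, one chooses $F^{(0)}=\sum_{k=1}^{i-1}\theta_{\xi_k^{(0)},\eta_k^{(0)}}$ on $E_{t_0}$ with $\|L_{t_0}-F^{(0)}\|<\mu_i(t_0)+\epsilon$. Since every element of $E_{t_0}$ is the image of some section of $E$, the vectors $\xi_k^{(0)},\eta_k^{(0)}$ lift to $\xi_k,\eta_k\in E$, and $F:=\sum_k\theta_{\xi_k,\eta_k}\in\mathcal{K}(E)$ satisfies $F_{t_0}=F^{(0)}$. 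The pointwise bound $\mu_i(L_t)\le\|L_t-F_t\|$ combined with the continuity of $t\mapsto\|L_t-F_t\|$ then gives $\limsup_{t\to t_0}\mu_i(L_t)\le\mu_i(L_{t_0})+\epsilon$.

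For lower semi-continuity I would use the dual formula
$$\mu_i(T)=\sup_{\substack{V\subset H\\ \dim V=i}}\min_{x\in V,\,\|x\|=1}\|Tx\|,$$
the supremum being attained on the span $V_0\subset E_{t_0}$ of eigenvectors of $L_{t_0}^*L_{t_0}$ for its top $i$ eigenvalues. Taking an orthonormal basis $e_1^{(0)},\dots,e_i^{(0)}$ of $V_0$, lifting each $e_k^{(0)}$ to a section $e_k\in E$, and setting $V_t=\mathrm{span}\{(e_1)_t,\dots,(e_i)_t\}$, the Gram matrix $G(t)=(\langle(e_k)_t,(e_l)_t\rangle)$ and $A(t)=(\langle L_t(e_k)_t,L_t(e_l)_t\rangle)$ are continuous in $t$ with $G(t_0)=I$, so $V_t$ has dimension $i$ for $t$ close to $t_0$ and
$$\mu_i(L_t)^2\ge\min_{c\in\Cbb^i\setminus\{0\}}\frac{c^*A(t)c}{c^*G(t)c},$$
a quantity continuous in $t$ and equal to $\mu_i(L_{t_0})^2$ at $t_0$; this yields $\liminf_{t\to t_0}\mu_i(L_t)\ge\mu_i(L_{t_0})$.

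The delicate point of the argument is securing the external statement that $t\mapsto\|K_t\|$ is actually continuous (and not merely upper semi-continuous, as holds for every element of a $C[0,1]$-algebra): this relies on $\mathcal{K}(E)$ being a \emph{continuous} $C[0,1]$-algebra, which follows from the continuous-field structure automatically carried by a Hilbert $C[0,1]$-module. Once this ingredient is in place, the two semi-continuity estimates above are just a routine combination of lifting finite-dimensional data with the continuity of eigenvalues of matrices with continuous entries.
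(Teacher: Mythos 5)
Your proof is correct, but it follows a genuinely different route from the paper's. The paper's argument is a one-step density argument: for finite-rank module operators the claim is taken as clear, and the Weyl-type estimate $|\mu_i(T_1)-\mu_i(T_2)|\le\|T_1-T_2\|$ (cited from the literature) shows that the map $\mathcal{K}(E)\to L^\infty([0,1])$, $T\mapsto\mu_i(T_\cdot)$, is $1$-Lipschitz; since finite-rank operators are dense in $\mathcal{K}(E)$ and their images lie in the closed subspace $C([0,1])\subset L^\infty([0,1])$, the lemma follows for all of $\mathcal{K}(E)$. You instead prove upper and lower semi-continuity separately, each via a Courant--Fischer-type variational characterisation, with lifting of finite-dimensional data from the fibre $E_{t_0}$ to sections of $E$. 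Your argument is sound: the lifting is available because evaluation $E\to E_{t_0}$ is surjective; the Gram-matrix trick correctly handles the loss of orthonormality of the lifted frame away from $t_0$; and the case $\mu_i(L_{t_0})=0$ in the lower bound is vacuous. The trade-off is that you need as a standing input the continuity of $t\mapsto\|K_t\|$ for $K\in\mathcal{K}(E)$, which is precisely the $i=1$ case of the lemma you are proving; you establish it by exactly the finite-rank-plus-density argument that the paper applies to all $i$ at once. In effect, the paper notices that the inequality $|\mu_i(T_1)-\mu_i(T_2)|\le\|T_1-T_2\|$ lets one transport that single density argument uniformly across all $i$, collapsing the two semi-continuity estimates into one line. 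Your version is more self-contained and more explicit about where the analytic content sits, while the paper's is shorter because it outsources the key inequality to a citation; both are valid.
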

\begin{proof} 
By \cite[theorem 2.1]{MR0246142}, one has if $T_1,T_2$ are compact operators, then for every $i\in \N$, $$|\mu_i(T_1)-\mu_i(T_2)|\leq \norm{T_1-T_2}.$$It follows that if $H$ is a Hilbert space and $E=H\otimes C([0,1])$ is a constant $C([0,1])$-module the result follows. For general modules the results follows from Kasparov stabilisation theorem \cite[theorem 13.6.2]{MR1656031}.
\end{proof}
\begin{rem} \Cref{cont spectrum} is false if $L$ is only supposed to be in $\mathcal{L}(E),$ and $L_t$ is compact for every $t$. For example if $E=C_{0}(]0,1])$ and $L$ the identity.
\end{rem}
\bigskip

The calculation of the spectrum of the harmonic oscillator in \Cref{cor witen} is a classical calculation. In particular we have \begin{prop}[{\cite[section V.3]{MR0493419}}]\label{computation spectrum harmonic osc}If $Q$ is a quadratic form on a real euclidean vector space $V$ with signature $(p,q)$ with $p+q=n$ and $$\xi_1\leq \dots\leq \xi_{q}<0< \xi_{q+1}\leq \dots\leq \xi_{n},$$ denote the eigenvalues of $Q$, then the spectrum of $$\left(d+d^*+c(Q)\right)^2:L^2(V,\Lambda_\C^kV^*)\to L^2(V,\Lambda_\C^kV^*)$$ is the weighted set\footnote{the union is with multiplicity} \begin{equation*}
\coprod_{\substack{J\subseteq \{1,\dots,n\}:|J|=k\\
\alpha\in \N^{n}}}\{\sum_{j\in J\Delta\{1,\dots,q\}}|\xi_j|+\sum_{j=1}^{n}\alpha_j|\xi_j|\}.
\end{equation*}
Here $c(Q)$ means the Clifford multiplication by the $1$-form associated to $Q$.
\end{prop}

\begin{cor}[Morse inequalities]If $C_i$ denotes the number of critical points of $f$, then for every $k$, $$\sum_{i=0}^k(-1)^{k-i}C_i\geq \sum_{i=0}^k(-1)^{k-i}\dim H^i(M,\R)$$
\end{cor}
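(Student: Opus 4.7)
The plan is to combine the spectral asymptotics of Corollary \ref{cor witen} with the explicit kernel count from Proposition \ref{computation spectrum harmonic osc} to produce, for small $t>0$, a finite-dimensional subcomplex of the De Rham complex that computes $H^\bullet(M,\R)$ and has exactly $C_p$ generators in degree $p$. The Morse inequalities then reduce to a standard linear-algebra fact about finite cochain complexes.

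First I would count the zeros of the model operator at a critical point $a$. In the weighted set of Proposition \ref{computation spectrum harmonic osc}, each of the three summands is $\geq 0$ (the first because $\xi_j(a)>0$ for $j>\ind{a}$, the second because $\xi_j(a)<0$ for $j\leq \ind{a}$, the third obviously). A term vanishes iff $J\cap\{\ind{a}+1,\dots,\dim M\}=\emptyset$, $\{1,\dots,\ind{a}\}\subseteq J$, and $\alpha=0$; with $|J|=p$ this forces $p=\ind{a}$ and $J=\{1,\dots,\ind{a}\}$, so exactly one zero eigenvalue appears in the matching degree and none otherwise. Summing over critical points, in the notation of Corollary \ref{cor witen}, $\lambda_i^p(0)=0$ for $i\leq C_p$ and $\lambda_i^p(0)>0$ for $i>C_p$.

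Second, by continuity of $t\mapsto t\lambda_i^p(t)$ at $0$, for $i>C_p$ one has $\lambda_i^p(t)\sim \lambda_i^p(0)/t\to+\infty$, while $\lambda_i^p(t)\to 0$ for $i\leq C_p$. Fix $t$ small enough that $\Delta_t^p$ has exactly $C_p$ eigenvalues below $1$ for each $p$, and let $E_t^p\subseteq \Omega^p(M)\otimes\C$ be the span of the corresponding eigenforms, so $\dim_\C E_t^p=C_p$. Since $d_t$ and $d_t^*$ commute with $\Delta_t$, they preserve $E_t^\bullet$, making $(E_t^\bullet,d_t)$ a finite-dimensional subcomplex. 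The Hodge decomposition on each eigenspace gives
\begin{equation*}
E_t^p=\ker \Delta_t^p\oplus d_t E_t^{p-1}\oplus d_t^* E_t^{p+1},
\end{equation*}
so $H^p(E_t^\bullet,d_t)=\ker \Delta_t^p$. Since $d_t=e^{-f/t}de^{f/t}$ is conjugate to $d$, the cohomology of $(\Omega^\bullet(M)\otimes\C,d_t)$ is $H^\bullet(M,\R)\otimes\C$, whose Hodge representative is precisely $\ker \Delta_t^\bullet$. Hence $\dim_\C H^p(E_t^\bullet,d_t)=\dim_\R H^p(M,\R)$.

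Finally, for any finite cochain complex $(C^\bullet,\partial)$ one has
\begin{equation*}
\sum_{i=0}^k(-1)^{k-i}\dim C^i\geq \sum_{i=0}^k(-1)^{k-i}\dim H^i(C^\bullet),
\end{equation*}
proved by induction using $\dim C^i=\dim\ker\partial_i+\dim\im\partial_i$ and $\dim H^i=\dim\ker\partial_i-\dim\im\partial_{i-1}$. Applied to $(E_t^\bullet,d_t)$ this yields the statement. The only step with genuine analytic content is the spectral splitting, already packaged in Corollary \ref{cor witen}; the main obstacle within this plan is the Hodge-theoretic identification in Step 3, routine but essential for linking the small-eigenvalue subcomplex to actual Betti numbers.
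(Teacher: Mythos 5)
Your proposal fills out the paper's two--sentence sketch and follows the same route: use \Cref{cor witen} and \Cref{computation spectrum harmonic osc} to isolate a finite--dimensional ``small--eigenvalue'' subcomplex of rank $C_p$ in degree $p$ whose cohomology is $H^\bullet(M,\R)$, then invoke the algebraic Morse inequalities for finite cochain complexes. The count of zero modes of the harmonic oscillator (Step~1) and the Hodge--theoretic identification and algebraic lemma (Steps~3--4) are correct.

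The one genuine gap is in Step~2. From \Cref{cor witen} you only know that $t\lambda_i^p(t)\to 0$ for $i\leq C_p$; this does \emph{not} imply $\lambda_i^p(t)\to 0$ (for instance $\lambda_i^p(t)\sim t^{-1/2}$ is compatible with the cited limit), so the claim that for small $t$ the operator $\Delta_t^p$ has exactly $C_p$ eigenvalues below the fixed threshold $1$ is not justified by the paper's stated results. The repair is cheap: set $c:=\min_p\lambda_{C_p+1}^p(0)>0$ and use the $t$--dependent threshold $c/(2t)$. Since $t\lambda_{C_p}^p(t)\to 0<c/2$ and $t\lambda_{C_p+1}^p(t)\to\lambda_{C_p+1}^p(0)\geq c$, for all sufficiently small $t$ one has $\lambda_{C_p}^p(t)<c/(2t)<\lambda_{C_p+1}^p(t)$ simultaneously in every degree $p$, so $E_t^p$, defined as the span of eigenforms of $\Delta_t^p$ with eigenvalue $<c/(2t)$, has dimension $C_p$; the remainder of your argument (closure of $E_t^\bullet$ under $d_t,d_t^*$, Hodge decomposition, algebraic inequality) then goes through verbatim. (The stronger assertion that the small eigenvalues in fact tend to $0$ is true by Witten's finer exponential estimates, but these are not among the results the paper establishes or cites.)
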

\begin{proof}
Multiplication by $e^f$ is an isomorphism between the complex $(\Omega^*(M),d+df)$ and $(\Omega^*(M),d).$ The corollary follows from Hodge theory and \Cref{cor witen}.
\end{proof}
\begin{rems}\begin{enumerate}
\item It is clear that the above proof verbatim works for Novikov Morse $1$-forms.
\item We could have equally well used the groupoid $\dnc(M\times M,\crit(f)).$ The main difference would be that $C^*(\dnc(M\times M,\crit(f)))$ is not Morita equivalent to $C([0,1])$ which makes the arguments slightly more complicated. One would need to use traces to obtain \cref{cor witen} as done in \cref{qualit result sect}.
\end{enumerate}
\end{rems}
\subsection{Morse-Bott functions}\label{sect Morse Bott}
Let $f:M\to \R$ be a Morse-Bott function. In this section it is simpler to work with $\dnc_{[-1,1]}$ than $\dnc_{[0,1]}$ to avoid difficulties with the boundary, see \cref{base dnc rem}. To simplify the notation we will use $\dnc$ to denote $\dnc_{[-1,1]}$. As in \cref{section classical witten deformation}, the groupoid \begin{align*}
 G&=\dnc(M\times M,\crit(f)\times \crit(f)) \rightrightarrows \dnc(M,\crit(f))\\&=\dnc(M,\crit(f))\times_{[-1,1]}\dnc(M,\crit(f))\rightrightarrows \dnc(M,\crit(f))
\end{align*} will be used to describe a global Witten deformation. 

The Lie algebroid of $G$ is $\dnc(TM,T\crit(f)).$ This vector bundle is isomorphic but not canonically to the vector bundle $\pi^*TM.$ In particular a Riemannian metric on $TM$ doesn't automatically give a Riemannian metric on $\dnc(TM,T\crit(f))$. In the other hand we can use a natural compactification of the space $G^0$ and \cref{sect comple}.

By \cref{DS blowup}, let  $$K=\blup_{r,s}(M\times M\times [-1,1],\crit(f)\times\crit(f)\times \{0\})\rightrightarrows \blup(M\times [-1,1],\crit(f)\times \{0\}).$$

One can immediately see that $$K^0=M\times [-1,1]\backslash \{0\}\sqcup M\backslash \crit(f)\times \{0\}\sqcup \mathbb{P}(N^M_{\crit(f)})\sqcup N^{M}_{\crit(f)}\times \{0\}$$ and \begin{align*}
K&=M\times M\times  [-1,1]\backslash \{0\}\sqcup M\backslash\crit(f)\times M\backslash\crit(f)\times  \{0\}\\&\sqcup (N^M_{\crit(f)}\backslash \{0\}\times N^M_{\crit(f)}\backslash \{0\})/\R^*\sqcup N^M_{\crit(f)}\times N^M_{\crit(f)}\times \{0\}. 
\end{align*} It follows that $\dnc(M,\crit(f))$ can be seen as a saturated open subset of $K^0$. Furthermore $G=K_{|\dnc(M,\crit(f))}.$

The set $$V=K^0\backslash \dnc(M,\crit(f))=M\backslash \crit(f)\times \{0\}\sqcup \mathbb{P}(N^M_{\crit(f)})=\blup(M\,\crit(f))$$ is a submanifold of $K^0$ of codimension $1$. Let $h\in C^\infty(K^0)$ be as in \cref{rem V dim 1}, a smooth non-negative function such that $h^{-1}(0)=V$ and locally $h$ is the square of a distance function to $V$. The choice of $h$ is unique up to multiplication by a positive smooth function on $K^0$. \begin{examp}\label{examp h} Let $(g_i)_{i=1}^{k}$ be a finite family of  smooth functions on $M$ such that \begin{enumerate}
\item $g_i(\crit(f))=0$ and $\cap_i g_i^{-1}(0)=\crit(f)$.
\item for every $x\in \crit(f)$ the linear map $T_xM/T_x\crit(f)\to \R^k$ given by $X\to (dg_1(X),\dots,dg_i(X))$ is injective.
\end{enumerate}The following function is an example of a function $h:K^0\to \R$ $$ h=\begin{cases}h(x,t)= \frac{t^2}{\sum_i g_i(x)^2+t^2}\quad\text{if} \; (x,t)\in M\times \R^*\\h(x,X,0)=\frac{1}{\sum dg_i(X)^2 +1}\quad  \text{if}\; (x,X,0)\in N^M_{\crit(f)}\\0 \quad \text{if not}
\end{cases}$$
\end{examp}

On the manifold $\dnc(M,\crit(f))$, one has the differential form $$\alpha=\dnc(df):\dnc(M,\crit(f))\to \dnc(T^*M,T^{\perp}\crit(f))=\dnc(TM,T\crit(f))^*.$$

\begin{lem}\label{lem h2}The form $h\alpha$ extends to a section of $\mathfrak{A}K^*$ furthermore it is nowhere zero on $V$.
\end{lem}
\begin{proof}
This clearly doesn't depend on the choice of $h$. So it is enough to prove it for $h$ given in \cref{examp h}.

Let $U_i$ be an open cover of $M$, such that $f$ is constant on $U_i\cap \crit(f)$ for every $i$. It follows that $\blup(U_i\times \R,U_i\cap \crit(f)\times \{0\})$ and $\dnc(U_i,U_i\cap\crit(f))$ is an open cover of $\blup(M\times \R,\crit(f)\times \{0\})$ and $\dnc(M,\crit(f))$ respectively. Hence to prove the lemma it is enough to suppose that $f$ is constant on $\crit(f)$. Let $c$ denote $f(\crit(f))$. The following function $\phi:\dnc(M,\crit(f))\to \R$ is well defined and smooth by results in \cref{dnc section}$$\phi=\begin{cases}\phi(x,t)=\frac{f(x)-c}{t^2}\quad \text{if}\; (x,t)\in M\times \R^*\\\phi(x,X,0)=\frac{1}{2}d^2f_x(X)\quad \text{if}\; (x,X)\in N^M_{\crit(f)}\end{cases}.$$
Furthermore it is straightforward to see that $d\phi=\alpha\in \Gamma(\mathfrak{A}G^0),$ where $d$ as usual denotes the composition of the De Rham derivative with the anchor map.

One has $$h\alpha=hd\phi=d(h\phi)-h\phi \frac{dh}{h}.$$ By \cref{rem V dim 1}, $\frac{dh}{h}$ extends to a smooth section on $K^0$. 

The function $h\phi$ extends to a smooth function on $K^0$. On $M\times \R^*$, one has $$h\phi(x,t)=\frac{f(x)-c}{\sum_ig_i^2(x)+t^2}.$$ This function clearly extends smoothly to $\blup(M\times \R,\crit(f)\times \{0\}).$ This proves the first part concerning the extension of $h\alpha$ to $K^0.$

It is immediate to see that the extension of $h\alpha$ to $K^0$ is equal to $\frac{df}{\sum g_i^2}$ on $M\backslash \crit(f)\times \{0\}$ which is clearly nonzero for every point in $M\backslash \crit(f)\times \{0\}$.

Finally we restrict our attention to the form $h\alpha$ on \begin{align*}
 \big(N^M_{\crit(f)}\backslash \{0\}\times N^M_{\crit(f)}\backslash \{0\}\big)/\R^*\sqcup N^M_{\crit(f)}\times N^M_{\crit(f)}&\rightrightarrows\mathbb{P}(N^{M}_{\crit(f)}\oplus \R)\\&= \mathbb{P}(N^M_{\crit(f)})\sqcup N^M_{\crit(f)}.
\end{align*}

By Morse-Bott lemma, we can suppose that $M=\crit(f)\times L$, with $L$ a vector space and $f=Q$ a quadratic form on $L$. The Lie algebroid of the groupoid  \begin{align*}
 \bigg(\crit(f)\times \crit(f)\bigg)\times \bigg((L\backslash \{0\}\times L\backslash \{0\})/\R^*\sqcup L\times L\bigg)\\ \rightrightarrows \crit(f)\times \mathbb{P}(L\oplus \R)= \crit(f)\times \big(\mathbb{P}(L)\sqcup L\big)
\end{align*} is equal to $T\crit(f)\times \sqcup_{l\in \mathbb{P}(L\oplus \R)}\Hom(l,L)$. It is immediate to see that the value of the extension of $h\alpha$ at $(X,T)\in T\crit(f)\times  \sqcup_{ l\in \mathbb{P}(L)}\Hom(l,TL)$ is given by $$h\alpha(X,T)=\frac{1}{\sum dg_i(a)^2}Q(a,T(a)),$$ where $a$ is any nonzero element in $l$. Hence, it is clear that $h\alpha$ is nowhere zero on $\mathbb{P}(N^M_{\crit(f)}).$ 
\end{proof}
\begin{prop}Let $g$ be a euclidean metric on $\mathfrak{A}K$. The restriction to $\dnc(M,\crit(f))$ of the metric $\frac{g}{h}$ together with the form $\alpha$ satisfies the properties of \cref{keyprop witten}. In particular $d+d^*+c(\alpha)$ is regular with compact resolvent on $C^*(G)$-$C^*$-modules
\end{prop}
\begin{proof}
The metric $g$ is complete by \cref{rem compete metric compact}. It follows from \cref{rem V dim 1} that $\frac{\norm{dh}_g}{h}$ extends to a smooth function on $K^0$, hence it is bounded by compactness of $K^0$. And by \cref{lem h2}, $\norm{h\alpha}_g$ is also bounded. Since $K^0$ is compact, the metric $g$ with the form $h\alpha$ trivially satisfy the conditions of \cref{keyprop witten}. It follows from \cref{comple prop} that $\frac{g}{h}$ and $\alpha$ satisfy the conditions of \cref{keyprop witten}. The result then follows.
\end{proof}


The groupoid $G$ is Morita equivalent to $[-1,1]$. Hence $C^*(G)$ can be replaced by $C([-1,1])$ in the previous proposition.

 \begin{rem}The groupoid $\dnc(M\times M,\crit(f))$ can also be used to obtain a global Witten deformation for Morse-Bott function. For that groupoid the algebroid $\dnc(TM,\crit(f))$ is canonically isomorphic to $\pi^*(TM)$, hence there are no difficulties with the choice of the metric. On the other hand the fiber of its $C^*$-algebra at $0$ is canonically Morita equivalent to $C_0(T^*\crit(f))$. This makes it more difficult to obtain precise statements about eigenvalues.
 \end{rem}

\section{Witten deformation on foliations}\label{section WItten foliations}
Let $F\subseteq TM$ be a subbundle (not necessarily integrable) of the tangent bundle of a closed manifold $M$, $f:M\to \R$ a smooth function. We are interested in the set $$\crit_F(f):=\{x\in M:df_x(F_x)=0\}.$$

\bigskip

Let $x_0\in \crit_F(f)$, $X\in \Gamma^\infty(TM)$, $Y\in \Gamma^\infty(F)$. One defines $$d^2_{x_0}f(X,Y):=(XYf)(x_0).$$ \begin{prop}\label{d2f foliation} The number $d^2_{x_0}f(X,Y)$ only depends on $X(x_0)$ and $Y(x_0)$. In other words $d^2_{x_0}f:T_{x_0}M\times F_{x_0}\to \R$ is a well defined bilinear form. \end{prop}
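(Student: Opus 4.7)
The plan is to show separately that $X(Yf)(x_0)$ depends only on $X(x_0)$ and on $Y(x_0)$, using the definition of longitudinal critical point $x_0 \in \crit_F(f)$ only for the second.

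For the dependence on $X$, I would observe that for a fixed section $Y \in \Gamma^\infty(F)$, the function $Yf \in C^\infty(M)$ is well defined, and $X(Yf)(x_0)$ is simply the derivation $X_{x_0}$ applied to $Yf$. Hence the value depends only on the tangent vector $X(x_0) \in T_{x_0}M$, and the map $T_{x_0}M \to \R$, $v \mapsto v(Yf)$, is linear. No critical-point hypothesis is needed here.

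For the dependence on $Y$, I would show $C^\infty(M)$-linearity of the map $\Gamma^\infty(F)\to \R$, $Y\mapsto X(Yf)(x_0)$, which by the usual tensoriality argument forces it to factor through evaluation at $x_0$. Given $g\in C^\infty(M)$ and $Y\in\Gamma^\infty(F)$, Leibniz gives
\begin{align*}
X\bigl((gY)f\bigr)(x_0) &= X\bigl(g\cdot (Yf)\bigr)(x_0)\\
 &= X(g)(x_0)\cdot (Yf)(x_0) + g(x_0)\cdot X(Yf)(x_0).
\end{align*}
Here the crucial point is that $(Yf)(x_0) = df_{x_0}(Y(x_0)) = 0$, because $Y(x_0)\in F_{x_0}$ and $x_0\in\crit_F(f)$ precisely means $df_{x_0}$ vanishes on $F_{x_0}$. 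This kills the first term, leaving $X((gY)f)(x_0) = g(x_0)\cdot X(Yf)(x_0)$, which is the claimed $C^\infty(M)$-linearity.

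Combining the two halves yields a well defined bilinear form $d^2_{x_0}f:T_{x_0}M\times F_{x_0}\to\R$. There is essentially no obstacle: the only subtle ingredient is recognising that the longitudinal critical condition is exactly what is needed to make the Leibniz correction term disappear, so that the formula is tensorial in the second slot even though $Y$ is required to lie in $F$ (so that $Yf$ is defined pointwise in terms of $df$ and $Y$).
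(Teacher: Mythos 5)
Your proposal is correct and follows essentially the same route as the paper: the dependence on $X$ is immediate since $Yf$ is a fixed smooth function, and the dependence on $Y$ comes from the Leibniz rule together with the observation that $(Zf)(x_0)=df_{x_0}(Z(x_0))=0$ for any section $Z$ of $F$, because $x_0$ is a longitudinal critical point. The paper phrases this by writing $Y'-Y$ as a sum of terms $gZ$ with $g(x_0)=0$ rather than by stating $C^\infty(M)$-linearity, but the computation and the use of the hypothesis are identical.
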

\begin{proof}
This is clear for $X$. Let $Y'\in \Gamma^\infty(F)$ be another section such that $Y'(x_0)=Y(x_0).$ It follows that $Y'-Y$ could be written as the sum of elements of the form $gZ$, where $g:M\to \R$ is a function that vanishes at $x_0$, and $Z\in \Gamma^\infty(F).$ Hence \begin{align*}
X(Y+gZ)f(x_0)=XYf(x_0)+X(g)(x_0)Zf(x_0)+g(x_0)XZf(x_0)=XYf(x_0),
\end{align*}
where the second term vanishes because $Zf(x_0)=0$ and the third because $g(x_0)=0.$
\end{proof}
\begin{prop}\label{trans more functions foliations}Let $Z=F^{\perp}\subseteq T^*M$. The section $df:M\to T^*M$ is transverse to $Z$ if and only if for every $x\in \crit_F(f)$, the bilinear map $d^2_xf$ is of maximal rank, that is the induced linear map $d^2_{x}f:T_xM\to F_x^*$ is surjective. Furthermore if this is the case, then $\crit_F(f)$ is a smooth manifold whose tangent bundle is $T\crit_F(f)=\ker(d^2_xf).$
\end{prop}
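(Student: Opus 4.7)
The overall plan is to rewrite $\crit_F(f)$ as the preimage $(df)^{-1}(Z)$ and to reduce the proposition to the standard preimage/transversality theorem applied to an explicit local system of defining equations for $Z$ built from a frame of $F$.

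Concretely, I fix $x_0 \in \crit_F(f)$ and choose a local smooth frame $e_1,\dots,e_k$ of $F$ near $x_0$, where $k = \rank F$. This frame produces $k$ smooth functions $\phi_a : T^*M \to \R$ defined by $\phi_a(\xi) = \xi(e_a(\pi(\xi)))$, whose common zero locus is $Z = F^\perp$ near $df(x_0)$. Since the $e_a(x_0)$ are linearly independent in $T_{x_0}M$, the $\phi_a$ have linearly independent fibrewise differentials at every point of $Z$ above $x_0$, so they form a regular set of defining equations and exhibit $Z$ as a codimension-$k$ submanifold of $T^*M$ locally. By the standard criterion, $df$ is transverse to $Z$ at $x_0$ if and only if $(\phi_1,\dots,\phi_k)\circ df : M \to \R^k$ is a submersion at $x_0$.

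The key identification is $\phi_a \circ df = e_a f$, immediate from $df(x)(e_a(x)) = (e_a f)(x)$. Consequently $\crit_F(f) = \bigcap_a (e_a f)^{-1}(0)$ locally, and transversality of $df$ to $Z$ at $x_0$ is equivalent to the linear independence of the differentials $d(e_a f)_{x_0} \in T^*_{x_0}M$. Specialising the defining formula $d^2_{x_0}f(X,Y) = (XYf)(x_0)$ to $Y = e_a$ gives $d(e_a f)_{x_0}(X) = d^2_{x_0}f(X, e_a(x_0))$. Because the $e_a(x_0)$ form a basis of $F_{x_0}$, linear independence of the $d(e_a f)_{x_0}$ translates exactly to surjectivity of the induced map $d^2_{x_0}f : T_{x_0}M \to F^*_{x_0}$, yielding the first assertion.

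For the second assertion, once transversality is established the preimage theorem, applied either to $df$ and $Z$ directly or to the submersion $(e_1 f,\dots,e_k f) : M \to \R^k$, shows that $\crit_F(f)$ is a smooth submanifold of codimension $k$ near $x_0$ with tangent space $\bigcap_a \ker d(e_a f)_{x_0}$. Using once more the identity $d(e_a f)_{x_0}(X) = d^2_{x_0}f(X, e_a(x_0))$, this intersection of kernels is precisely $\{X \in T_{x_0}M : d^2_{x_0}f(X,\cdot) = 0 \text{ in } F^*_{x_0}\} = \ker d^2_{x_0}f$, as claimed. No serious obstacle arises: the whole argument is purely linear-algebraic transversality, and the one point requiring care is the coordinate-free verification that $\phi_a \circ df = e_a f$ and that the $\phi_a$ cut out $Z$ regularly, both of which reduce directly to the definition $Z = F^\perp$.
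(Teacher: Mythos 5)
Your proof is correct and follows essentially the same route as the paper: the paper composes $df$ with the global restriction map $h:T^*M\to F^*$ and identifies the derivative of the longitudinal differential $d_Ff$ with $d^2f$ via a path computation, while you render the same reduction in a local frame $e_1,\dots,e_k$ of $F$ via the defining functions $\phi_a$ and the identity $d(e_af)_{x_0}(X)=d^2_{x_0}f(X,e_a(x_0))$. The two arguments are the same up to this choice of local trivialization of $F^*$.
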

\begin{proof}
Let $h:T^*M\to F^*$ be the map which restricts a linear map on $TM$ to $F$. It is clear that $h\circ df$ is equal to the longitudinal derivative of $f$. Furthermore the inverse image by $h$ of the zero section is equal to $Z$. It follows that $df:M\to T^*M$ is transverse to $Z$ if and only if the longitudinal derivative $d_Ff:M\to F^*$ is transverse to the zero section.

Let $c(t)$ be a path in $M$, with $c(0)$ a point such that $d_Ff_{c(0)}=0$. The derivative of the function $d_Ff_{c(t)}:\R\to F^*$ at zero is then a linear map on $T_{c(0)}F^*=T_{c(0)}M\oplus F^*_{c(0)}.$ The value of this linear map at a vector in $F_{c(0)}$ can be computed using a section of $X$ of $F$. It follows that the derivative of $d_Ff_{c(t)}$ at time $0$ applied to $X(c(0))$ is equal to $\frac{d}{dt}_{|t=0}Xf(c(t))$. This is precisely the definition of $d^2_{c(0)}f(c'(0),X(0))$. The proposition is then clear.
\end{proof}
By Thom's multijet transversality theorem (see \cite[theorem 4.9]{MR0341518}), the transversality condition of \Cref{trans more functions foliations} is satisfied generically. We now fix such a function $f$, and suppose that $F$ is integrable.

\begin{rem} The manifold $\crit_F(f)$ is of complementary dimension to $F$. It is transverse to $F$ at a point $x\in \crit_F(f)$ if and only if the critical point $x$ of the function $f|l_x$ is non degenerate, where $l_x$ is the leaf containing $x.$ In particular, if the foliation doesn't admit a closed transversal, then there exist no smooth function which is Morse on each leaf.\end{rem}

Let $$\mathcal{G}(M,F)=\{(\gamma(0),[\gamma],\gamma(1))|\gamma:[0,1]\to M,\gamma'(t)\in F\,\forall t\in [0,1]\}$$ be the holonomy groupoid of $F$, where $[\gamma]$ denotes the holonomy class of $\gamma$.

Let \begin{align*}
 G=\dnc(\mathcal{G}(M,F),\crit_F(f))\rightrightarrows \dnc(M,\crit_F(f))
\end{align*} be the deformation of foliation groupoid $\mathcal{G}(M,F)$ along the submanifold of its units $\crit_F(f).$ In this section and the next one all deformation to the normal cone are fibered on $[0,1]$. By \Cref{Construction of dnc grouopids}, this is a Lie groupoid whose Lie algebroid is equal to $\dnc(F,\crit_F(f))$. Recall that by \Cref{examps dnc}, $$\mathcal{N}^{\mathcal{G}(M,F)}_{\crit_F(f)}=\{(X,A,Y):X,Y\in \mathcal{N}^M_{\crit_F(f)},A\in F,\; Y-X=A\mod T\crit_F(f) \}.$$

\bigskip

Let $g$ be a Euclidean metric on $F$. The Lie algebroid $\dnc(F,\crit_F(f))$ admits then a Euclidean metric by \Cref{metric dnc E}. On $M\times \{t\}$, it is equal to $\frac{g}{t^2}$, and on $\mathcal{N}^M_{\crit_F(f)}$ it is equal to $g_{|\crit_F(f)}$. This metric is complete because for $t\neq 0$, the metric $g$ is complete on each leaf, and for $t=0$, it is complete by the description of $\mathcal{N}^{\mathcal{G}(M,F)}_{\crit_F(f)}$ given above.

\bigskip

 Let $$\alpha=\dnc(d_Ff):\dnc(M,\crit_F(f))\to \dnc(F^*,\crit_F(f)),$$ where $d_F$ is the longitudinal derivative. The map $\alpha$ is regarded as a section of $$\mathfrak{A}G^*=\dnc(F,\crit_F(f))^*=\dnc(F^*,\crit_F(f)).$$ See \Cref{metric dnc E} for the last equality. On $M\times\{t\}$, $\alpha=\frac{d_Ff}{t^2}$, and on $\mathcal{N}^M_{\crit_F(f)}\times\{0\}$ it is equal to $d^2f$.

\bigskip

Let us show that the hypotheses of \Cref{keyprop witten} hold.
\begin{enumerate}
\item It is clear that the form $\alpha$ is closed.
\item On $M\times\{t\}$, one has $$\alpha^{\#_{\frac{g}{t^2}}}=d_Ff^{\#_g},$$ where $\#$ is the musical isomorphism. Hence $\mathcal{L}_{\alpha^{\#}}$ is independent of $t$. Since the Riemannian metric is multiplied by a scalar, it follows that $\mathcal{L}_{\alpha^{\#}}^*$ doesn't depend on $t$ as well. In other words, the section $\mathcal{L}_{\alpha^{\#}}+\mathcal{L}_{\alpha^{\#}}^*$ on $\dnc(M,\crit_F(f))$ is the pullback of the section $\mathcal{L}_{d_Ff^{\#_g}}+\mathcal{L}_{d_Ff^{\#_g}}^*$ on $M$ using the projection map $\pi:\dnc(M,\crit_F(f))\to M$. Hence the norm of the section $\mathcal{L}_{\alpha^{\#}}+\mathcal{L}_{\alpha^{\#}}^*$ is bounded by its boundness on $M$.
\item On $M\times\{t\}$, one has $$\norm{\alpha}_{\frac{g}{t^2}}=\norm{\frac{d_Ff}{t^2}}_{\frac{g}{t^2}}=\frac{\norm{d_Ff}_g}{t}$$ and on $N^M_{\crit_F(f)}\times\{0\}$, $$\norm{\alpha}_{g}=\norm{d^2f}_{g_a}.$$ Transversality of $d_Ff$ to the zero section is then equivalent to properness of $\norm{\alpha}$ as a function on $\dnc(M,\crit_F(f)).$
\end{enumerate}
\begin{theorem}\label{Wittens thm foliations}The closure of the operator $d+d^*+c(\alpha)$ acting on $C^*(\Lambda_\C\mathfrak{A}G^*)$ is a regular self adjoint operator with a compact resolvent.
\end{theorem}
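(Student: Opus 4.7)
The plan is to apply Proposition \ref{keyprop witten} directly to the data $(G, g, \alpha)$ set up in the paragraphs immediately preceding the statement: namely $G = \dnc(\mathcal{G}(M,F),\crit_F(f))$ with its complete Euclidean metric on $\mathfrak{A}G = \dnc(F,\crit_F(f))$, and the form $\alpha = \dnc(df_{|F})$. Since the three hypotheses of \cref{keyprop witten} have in fact already been verified in the indexed list just above the theorem, the proof reduces to a one-line invocation of that proposition.

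To be a little more explicit about what I would say, I would recall that (1) $\alpha$ is closed (indeed, it comes via the $\dnc$ functor from the exact form $df$ restricted to $F$), so $d\alpha$ is trivially bounded; (2) the scaling of the metric by $1/t^2$ is exactly cancelled by the scaling of $\alpha$ by $1/t^2$ under the musical isomorphism, so $\alpha^{\#}$, and hence $\mathcal{L}_{\alpha^{\#}} + \mathcal{L}_{\alpha^{\#}}^{*}$, is $t$-independent for $t \neq 0$, extends continuously to $t = 0$, and is globally bounded by compactness of $M$; and (3) properness of $\norm{\alpha}$ on $\dnc(M,\crit_F(f))$ is exactly where the transversality hypothesis on $df_{|F}$ enters, since it ensures through \cref{trans more functions foliations} that $d^2 f$ is injective on the normal bundle (giving properness on the $t=0$ stratum), while the blow-up $\norm{\alpha} = \norm{df_{|F}}_g / t$ on $M\times\{t\}$ handles points with $t$ small and $df_{|F}$ non-vanishing.

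The main obstacle here is conceptual rather than analytic, and it has already been cleared by the construction carried out above: the whole role of $\dnc$ is to translate the transversality of $df_{|F}$ to the zero section into properness of $\norm{\alpha}$ on the deformation space, which is precisely the hypothesis required to feed into \cref{keyprop witten}. Once this translation is in hand, no further estimates are needed, and the compact-resolvent statement on the $C^*G$-module $C^*(\Lambda_\C \mathfrak{A}G^*)$ follows at once from the preliminary proposition.
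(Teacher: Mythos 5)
Your proposal matches the paper exactly: the paper gives no separate proof environment for this theorem because the preceding paragraphs set up $G$, the complete metric on $\mathfrak{A}G=\dnc(F,\crit_F(f))$, and $\alpha=\dnc(df_{|F})$, and then verify hypotheses (1)--(3) of \cref{keyprop witten} in the itemized list, after which the theorem is stated as an immediate consequence. Your additional remarks (closedness of $\alpha$ giving bounded $d\alpha$, the $1/t^2$ cancellation making $\mathcal{L}_{\alpha^{\#}}+\mathcal{L}_{\alpha^{\#}}^*$ $t$-independent and bounded by compactness of $M$, and transversality of $df_{|F}$ being exactly properness of $\norm{\alpha}$ via the injectivity of $d^2f$ on the normal bundle) are the same observations the paper records, so this is a correct rendering of the paper's argument.
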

By \Cref{Wittens thm foliations}, the operator $d+d^*+c(\alpha)$ defines an element in $\KK^0(\C,C^*G).$ By regarding the evaluation  at $0$ and at $1$ of the previous element, one deduces:
\begin{cor}The Euler characteristic $e(F)$ of $\mathcal{F}$ as an element in $\KK(\C,C^*(\mathcal{G}(M,F)))$ can be represented by the element $d+d^*+c(\alpha)$ in $\KK^0(\C,C^*\mathcal{N}^{\mathcal{G}(M,F)}_{\crit_F(f)}).$ More precisely, $$e(F)=\mathrm{Ind}^{\mathcal{G}(M,F)}_{\crit_F(f)}([d+d^*+c(\alpha)]),$$where $$\mathrm{Ind}^{\mathcal{G}(M,F)}_{\crit_F(f)}:K_0\left(C^*\left(N^{\mathcal{G}(M,F)}_{\crit_F(f)}\right)\right)\to K_0(C^*\mathcal{G}(M,F))$$ is defined in \cite{Debord:2017aa}. It is also defined in \cite{MR775126} in a slightly different formalism where it is denoted by $i_!$.
\end{cor}
\begin{rem}It is clear that one can replace $f$ with a $1$-form $\alpha\in \Gamma(F^*)$ such that $d_F\alpha=0$ and $\alpha:M\to F^*$ is transverse to the zero section.
\end{rem}

\subsection{Quantitative result}\label{qualit result sect}
In this section, we will show that when a transverse measure is used to obtain a quantitative result from \cref{Wittens thm foliations}, one obtains Connes-Fack Morse inequalities. More precisely, we suppose that $M$ is a compact smooth manifold, $F$ a foliation of dimension $p$ and codimension $q$ equipped with a holonomy invariant transverse measure $v$. 
We refer to \cite{MR548112,MR679730} for more details on transverse measure from the point of view of Von Neumann algebras. As in \cref{section WItten foliations}, a Euclidean metric $g$ on $F$ is chosen. The measure $\nu$ together with $g$ give rise to a measure on $M$ that will be denoted $\mu=g\times \nu$.

\begin{dfn}\label{nice dfn} A smooth function $f:M\to \R$ is called nice if 
 \begin{enumerate}
\item all longitudinal critical points of $f$ are either Morse or of Birth-death type. See \cite{MR1070701} for the definition of Birth-death singularity.
\item the set of leaves on which $f$ isn't Morse is of $\nu$-measure zero.
\end{enumerate}
\end{dfn}
It is shown in \cite{MR1070701} and \cite{MR1760426} that any smooth function can be $C^0$ approximated by nice functions. Furthermore if $\codim(F)\leq \dim(F)$ then the approximation can be supposed $C^1$. In what follows we suppose that $f$ is a nice function.

 Morse lemma for a birth-death critical point $x_0$ states the following; there exists a foliated chart where $x_0=0\in U\subseteq \R^p\times \R^q$ such that $$f(x,y)=f(0,y)+\frac{1}{3}x_1^3-y_1x_1+ x_2^2\dots+x^2_{p-i}-x^2_{p-i+1}-\dots -x_p^2,$$ and $\crit_F(f)=\{(x,y):y_1=x_1^2,x_2,\dots,x_p=0\}$. Here $i$ is called the index of $x_0$.  We will suppose that $g$ is locally Euclidean in such foliated charts. This is a nonrestrictive condition since everything below is about densities and not Euclidean metrics. The eigenvalues of the longitudinal Hessian of $f$ at a point $(x,y)\in \crit_F(f)$ are equal to $2$ with multiplicity $p-1-i$, $-2$ with multiplicity $i$, and $2x_1$ with multiplicity $1$.

\smallskip

Such local coordinates will be called nice local charts. For convenience we will suppose that $U$ contains $[-1,1]^{n}.$

\smallskip

 We denote by $G=\dnc(\mathcal{G}(M,F),\crit_F(f)).$ By \cite{MR548112}, for $t\neq 0$, $\nu$ defines a semi-finite lower semi-continuous trace on $C^*(G)$ by the formula, $$\tau_t(\phi)=\frac{1}{t^{\dim(p)}}\int_{M}\phi(\cdot,t) d\mu,$$ where $\phi\in C_c^\infty(G)$, and the integral is on the subspace of the space of objects $M\times \{t\}\subseteq \dnc(M,\crit_F(f))$. Notice that the factor $\frac{1}{t^{\dim(p)}}$ comes from the fact that we work in this article with functions instead of densities. 
 
\smallskip

 For $t=0$, let $ \critr_F(f)\subseteq \crit_F(f)$ be the open dense subset of points where $\crit_F(f)$ is transversal to the foliation. On $\critr_F(f)$, the vector bundle $F_{|\critr_F(f)}$ is transverse to $T\critr_F(f)$. We identify the vector bundle $N^{M}_{\critr_F(f)}$ with $F_{|{\critr_F(f)}}$. The trace $\tau_{0}$ on $C_c^\infty(\mathcal{N}^{\mathcal{G}(M,F)}_{\crit_F(f)})$ is defined by first restricting a function $\phi\in C_c^\infty(\mathcal{N}^{\mathcal{G}(M,F)}_{\crit_F(f)})$ to the open subset $\mathcal{N}^M_{\critr_F(f)}$ of the space of objects $\mathcal{N}^M_{\crit_F(f)}$, then using the formula $$\tau_{0}(\phi)=\int_{x\in \critr_F(f)}\int_{F_x}\phi dg_{x}d\nu.$$ Equivalently the restriction of $C^*(\mathcal{N}^{\mathcal{G}(M,F)}_{\crit_F(f)})$ to $\critr_F(f)$ is equal to a fiber bundle over $\critr_F(f)$ with the fiber over each point $x\in \critr_F(f)$ equal to $\mathcal{K}(L^2(F_x))$. The trace is then the canonical trace on each fiber followed with the trace against the transverse measure on the base.
 
 \smallskip
 
 This trace extends to the $C^*$-algebra. On the other hand this trace is not necessarily finite $C^\infty_c(G)$. In fact in \cref{continuity trace lemma}, we prove that $\tau_0$ is finite if and only if for every birth-death singularity $x_0$, and nice local chart around $x_0$, the following integral is finite $$\int_{\{y\in [-1,1]^q:y_1>0\}}\frac{1}{\sqrt{y_1}}<+\infty.$$
  
Let us remark that in a nice local chart the singular values of the longitudinal Hessian of $f$ at a point $(x,y)\in \crit_F(f)$ are equal to $2$ with multiplicity $p-1$ and $2\sqrt{y_1}$ with multiplicity $1$. It follows that the integrability of the function $\frac{1}{\sqrt{y_1}}$ around every birth-death singularity is equivalent to the finiteness of the integral 
   $$\int_{\critr_F(f)}\frac{1}{E_{p}(d^2_Ff)}d\nu<\infty,$$ where $E_p(d^2_Ff)$ is the $p$th singular value of the longitudinal Hessian (the restriction of $d^2f$ defined in \cref{d2f foliation} to $F\times F$). 
\begin{prop}\label{continuity trace lemma}
\begin{enumerate}
\item The trace $\tau_0$ is finite if and only if  $\int_{\critr_F(f)}\frac{1}{E_{p}(d^2_Ff)}d\nu<\infty$.
\item Suppose that $\int_{\critr_F(f)}\frac{1}{E_{p}(d^2_Ff)}d\nu<\infty$, and let $\phi$ be a smooth function with compact support on $ \dnc(\mathcal{G}(M,F),\crit_F(f))$. The function $t\to \tau_{t}(\phi)$ is continuous .
\end{enumerate}
\end{prop}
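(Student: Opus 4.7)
The strategy is to reduce both items to local computations in the nice foliated charts around longitudinal critical points via a partition of unity on $\dnc(\mathcal{G}(M,F),\crit_F(f))$. Away from $\crit_F(f)\times\{0\}$ the statements are trivial, and near a Morse point $E_p(d^2_Ff)$ is bounded below, so we may focus on a birth-death singularity $x_0$.

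In a nice chart around $x_0$, parametrize $\crit_F(f)\cap U$ by $(x_1,y_2,\dots,y_q)$ with $y_1=x_1^2$ and $x_2=\dots=x_p=0$. Using \Cref{examps dnc} together with the basis $(\partial_{x_2},\dots,\partial_{x_p},\partial_{y_1})$ of $N^M_{\crit_F(f),x}$, the projection $F_x\to N^M_{\crit_F(f),x}$ reads $(A_1,\dots,A_p)\mapsto(A_2,\dots,A_p,-2x_1A_1)$, with Jacobian $|\det|=2|x_1|=E_p(d^2_Ff)$. The push-forward of $\nu$ along $y_1=x_1^2$ is $2|x_1|\rho(x_1^2,y_2,\dots)\,dx_1\,dy_2\cdots dy_q$, where $\rho$ is the density of $\nu$ on a transversal. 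Substituting $B=\mathrm{proj}(A)$ in the fiber integral defining $\tau_0$ yields
\[
\tau_0(\phi)=\int_{\critr_F(f)}\left(\int_{N^M_{\crit_F(f),x}}\phi|_{\mathrm{units}}(x,B)\,dB\right)\frac{d\nu(x)}{E_p(d^2_Ff)_x}.
\]
This bounds $\tau_0(\phi)\leq C_\phi\int\frac{1}{E_p}d\nu$ for every $\phi\in C_c^\infty$, and by taking $\phi$ so that the inner integral is bounded below on a set of positive $\nu$-measure one obtains the matching lower bound; hence $\tau_0$ is finite on $C_c^\infty$ iff $\int\frac{1}{E_p}d\nu<\infty$. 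The equivalence with $\int_{\{y_1>0\}}\frac{1}{\sqrt{y_1}}d\nu$ is then the substitution $y_1=x_1^2$. This proves item (1).

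For item (2), assume the integrability hypothesis, fix $\phi\in C_c^\infty$ supported in a nice chart near $x_0$, and introduce DNC coordinates $(x_1,y_2,\dots,y_q,X_2,\dots,X_p,Y_1,t)$ whose image in $M\times\R$ is $(x_1,tX_2,\dots,tX_p,x_1^2+tY_1,y_2,\dots,y_q,t)$. The Jacobian $t^p$ of this substitution cancels the $\tfrac{1}{t^p}$ factor in $\tau_t$, giving the uniform-in-$t$ formula
\[
\tau_t(\phi)=\int\phi^{\mathrm{DNC}}(x_1,y,X,Y_1,t)\,\rho(x_1^2+tY_1,y_2,\dots)\,dx_1\,dX\,dY_1\,dy_2\cdots dy_q,
\]
whose $t=0$ value coincides with the formula for $\tau_0$ from item (1), and whose integrand converges pointwise to its $t=0$ value at almost every point.

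The main obstacle is controlling the family of integrals uniformly in $t$ so as to exchange the limit with integration. I proceed by a Vitali-type argument: for $\eta>0$ split the domain along $B_\eta=\{x_1^2\leq\eta\}$. On the complement, for small $t$ the integrand is bounded by a $t$-independent integrable function and dominated convergence applies directly. On $B_\eta$, integrating first in $x_1$ via the coarea change of variable $y_1=x_1^2+tY_1$ shows that the $B_\eta$-integral is bounded uniformly in $t$ by a constant multiple of $\int_{\{0<y_1<2\eta\}}\frac{1}{\sqrt{y_1}}\,d\nu$, which by hypothesis tends to zero as $\eta\to 0$. Combining the two estimates yields $\lim_{t\to 0}\tau_t(\phi)=\tau_0(\phi)$, establishing continuity.
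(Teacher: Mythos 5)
Your item (1) is essentially correct and follows the same computation as the paper, just organized more invariantly: identifying the Jacobian of the projection $F_x\to N^M_{\crit_F(f),x}$ with $E_p(d^2_Ff)=2|x_1|$ and pulling the divergence into the weight $1/E_p$ is fine, and it does not actually require the density $\rho$ you introduce — the $2$-to-$1$ pushforward of $\nu$ from $\critr_F(f)$ to the transversal $\{y_1>0\}$, $(x_1,y')\mapsto(x_1^2,y')$, can be used directly.

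The gap is in item (2). The substitution $y_1=x_1^2+tY_1$ with "Jacobian $t^p$ cancelling the $1/t^p$" silently assumes that the holonomy invariant transverse measure $\nu$ is absolutely continuous in the $y_1$-direction with a density $\rho$. That is not a hypothesis of the proposition and fails in basic examples (e.g.\ an atomic $\nu$ supported on finitely many compact leaves). In a nice chart the measure is $dx\,d\nu(y)$: only the $p$ leafwise coordinates carry Lebesgue measure, so $x_i=tX_i$ for $i\ge 2$ yields a factor $t^{p-1}$ and leaves an uncancelled $1/t$ that cannot be absorbed by rescaling the transverse variable $y_1$. A telltale sign that something is off: under your density assumption the hypothesis $\int_{\critr_F(f)}\tfrac{1}{E_p(d^2_Ff)}d\nu<\infty$ becomes automatic (since $\int_0^1 y_1^{-1/2}\rho\,dy_1<\infty$ for bounded $\rho$), which would make both the hypothesis of (2) and \cref{approx fun} pointless. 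The paper instead absorbs the remaining $1/t$ by a substitution in the \emph{leafwise} variable $x_1$ for fixed $y$: on each of $\{x_1\ge 0\}$ and $\{x_1<0\}$ set $z=(y_1-x_1^2)/t$, so $dx_1=\tfrac{t}{2\sqrt{y_1-tz}}\,dz$, producing the weight $\tfrac{1}{2\sqrt{y_1-tz}}$; continuity at $t=0$ then follows from dominated convergence against a multiple of $\tfrac{1}{\sqrt{y_1}}$, which is precisely where the integrability hypothesis is used. Your Vitali-type splitting at the end is in the right spirit, but it rests on the unjustified coarea step, so item (2) needs to be redone along these lines.
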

\begin{proof}
By a partition of unity argument, it suffices to verify the proposition around a local neighbourhood of every critical point $x\in \crit_F(f)$. The case of Morse singularity is straightforward. We will only do the case of a birth-death singularity. Let $x_0\in \crit_F(f)$ be a birth-death singularity. In a nice chart around $x_0$, we have $$f(x,y)=f(0,y)+\frac{1}{3}x_1^3-y_1x_1+ x_2^2\dots+x^2_{p-i}-x^2_{p-i+1}-\dots -x_p^2$$ and $\crit_F(f)=\{(x,y):y_1=x_1^2,x_2,\dots,x_p=0\}$.  We can assume that $\phi\in C^\infty_c(\dnc(\R^p\times \R^q, \crit_F(f)))$ is a smooth function. Being smooth with compact support is equivalent to the existence of a smooth function with compact support $\tilde{\phi}\in C^\infty_c(\R^p\times \R^q\times \R)$ such that for $t\neq 0$, we have $$\phi(x,y,t)=\tilde{\phi}(x_1,\frac{x_2}{t},\dots,\frac{x_p}{t},\frac{y_1-x_1^2}{t},y_2,\dots,y_q,t).$$  

Notice that $\nu(\{0\}\times \R^{q-1})=0$ by the conditions on $f$.

In the following equation we will use the notation $x',y'$ for the vectors $(x_2,\dots,x_p)$, $(y_1,\dots,y_q)$. For $t\neq 0$, one has
\begin{align*}
\tau_t(\phi)&=\int_{\{y\in \R^q:y_1>0\}}\int_{\R^p}\frac{1}{t^p}\phi(x,y,t)dxd\nu(y)\\&=\int_{\{y\in \R^q:y_1>0\}}\int_{\R^p}\frac{1}{t^p}\tilde{\phi}(x_1,\frac{x'}{t},\frac{y_1-x_1^2}{t},y',t)dxd\nu(y)\\&=\int_{\{y\in \R^q:y_1>0\}}\int_{\R^p}\frac{1}{t}\tilde{\phi}(x_1,x',\frac{y_1-x_1^2}{t},y',t)dxd\nu(y)
\end{align*}
In the last identity we divide the integral into the sum of the integral over the region where $x_1\geq 0$ and the integral over the complement and then used the change of variables $z=\frac{y_1-x_1^2}{t}$, to obtain \begin{align*}
\tau_t(\phi)&=\int_{\{y\in \R^q:y_1>0\}}\int_{\R^{p-1}}\int_{-\infty}^{\frac{y_1}{t}}\frac{1}{2\sqrt{y_1-tz}}\tilde{\phi}(\sqrt{y_1-tz},x',z,y',t)dzdx'd\nu(y)\\&+\int_{\{y\in \R^q:y_1>0\}}\int_{\R^{p-1}}\int_{-\infty}^{\frac{y_1}{t}}\frac{1}{2\sqrt{y_1-tz}}\tilde{\phi}(-\sqrt{y_1-tz},x',z,y',t)dzdx'd\nu(y)
\end{align*}

Similarly one checks as well that \begin{align*}
\tau_0(\phi)&=\int_{\{y\in \R^q:y_1>0\}}\int_{\R^p}\frac{1}{2\sqrt{y_1}}\tilde{\phi}(\sqrt{y_1},x',z,y',0)dx'dzd\nu(y)\\&+\int_{\{y\in \R^q:y_1>0\}}\int_{\R^p}\frac{1}{2\sqrt{y_1}}\tilde{\phi}(-\sqrt{y_1},x',z,y',0)dx'dzd\nu(y).
\end{align*}
The first part of the proposition is then clear and the second follows from the dominated convergence theorem.
\end{proof}
\begin{prop}\label{approx fun}For any nice function $f:M\to \R$ and $\epsilon>0$, there exists a nice function $h:M\to \R$ such that \begin{enumerate}
\item $\int_{\critr_F(h)}\frac{1}{E_p(d^2_Fh)}d\nu<+\infty$.
\item $|\nu(\crit^i_F(h))-\nu(\crit^i_F(f))|\leq \epsilon$, where $\crit^i_F(f)$, $\crit^i_F(h)$ is the set of Morse critical points of index $i$ of $f$ and $h$ respectively.
\end{enumerate}
\end{prop}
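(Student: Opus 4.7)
The plan is to perturb $f$ near its birth-death locus $\Sigma=\crit_F(f)\setminus\critr_F(f)$ so as to move the degenerate set to a generic position, and then extract a good perturbation parameter by a Fubini/Sard argument. Cover the compact submanifold $\Sigma\subset M$ by finitely many nice foliated charts $U_k$ in which $f$ has the standard birth-death form; locally $\Sigma\cap U_k=\{x_1=0,\,y_1=0,\,x_{\geq 2}=0\}$ and the smallest singular value of $d^2_Ff$ on $\critr_F(f)\cap U_k$ equals $2\sqrt{y_1}$. Choose bump functions $\chi_k\in C^\infty_c(U_k)$ with $\chi_k\equiv 1$ on an $M$-open neighborhood $W_k\supset\Sigma\cap U_k$ of the form $\{|y_1|<\delta_0\}\cap$ (bounded set in the remaining coordinates), arranged so that $\chi_k$ depends only on $y_1$ on $W_k$, and set $h_s=f-\sum_ks_k\chi_k\,x_1^{(k)}$ for $s=(s_k)\in(0,\epsilon_0]^N$ with $\epsilon_0<\delta_0$.

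A direct calculation in each $U_k$ shows that on the core $W_k$ the critical set of $h_s$ is $\{x_1^2=y_1+s_k,\,x_{\geq 2}=0\}$, its birth-death locus is the hypersurface $\Sigma_k(s_k)=\{x_1=0,\,y_1=-s_k\}$, and $E_p(d^2_Fh_s)=2\sqrt{y_1+s_k}$. On the transition $U_k\setminus W_k$, either $y_1\geq\delta_0$ (so $E_p\geq 2\sqrt{\delta_0}$ on the critical set) or $y_1\leq-\delta_0$ in which case $y_1+s_k\chi_k<0$ and no critical points survive (the choice $\epsilon_0<\delta_0$ is used here); outside $U_k$, $h_s=f$ and $E_p$ is bounded below away from $\Sigma$. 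Thus only the core $W_k$ can contribute to the potentially divergent part of the integrability.

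A Fubini argument in each $s_k$ now delivers both conclusions for a.e.\ $s$. For niceness,
\[\int_0^{\epsilon_0}\nu(\Sigma_k(s))\,ds=\int d\nu(y)\cdot|\{s\in[0,\epsilon_0]:s=-y_1\}|_{\mathrm{Leb}}=0,\]
so $\nu(\Sigma_k(s))=0$ for a.e.\ $s$, and a change of coordinate $\tilde y_1=y_1+s_k$ shows the new critical points are of genuine birth-death type. For integrability,
\[\int_0^{\epsilon_0}\!\!ds\int_{W_k,\,y_1>-s}\!\!\frac{d\nu(y)}{\sqrt{y_1+s}}=\int_{W_k,\,y_1>-\epsilon_0}\!\!d\nu(y)\cdot 2\bigl[\sqrt{y_1+\epsilon_0}-\sqrt{\max(y_1,0)}\bigr]\leq 2\sqrt{\epsilon_0}\,\nu(W_k)<\infty,\]
so $I(s)=\int_{\critr_F(h_s)}E_p(d^2_Fh_s)^{-1}\,d\nu<\infty$ for a.e.\ $s$.

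Finally, the symmetric difference of $\crit^i_F(h_s)$ and $\crit^i_F(f)$ is contained in $\bigcup_kW_k\cap\{|y_1|\leq s_k\}$, whose $\nu$-measure tends to $\nu(\{y_1=0\})=0$ as $s\to 0$ by outer regularity of $\nu$ and niceness of $f$. Picking $s$ both generic (in the two Fubini senses) and sufficiently small produces the required $h=h_s$. The main delicate point is the construction of the bumps $\chi_k$: one must arrange the cores $W_k$ so that the transition region $U_k\setminus W_k$ contains no critical points with small $E_p$, which is exactly what the choice $W_k=\{|y_1|<\delta_0\}\cap\cdots$ together with $\epsilon_0<\delta_0$ ensures, confining all "bad" critical behavior to the core where the perturbation acts as a pure shift in $y_1$ and the Fubini bound applies uniformly.
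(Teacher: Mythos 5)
Your proof is correct and follows essentially the same route as the paper: both arguments shift the birth--death locus in the transverse direction $y_1$ by a parameter (the paper via precomposition with a compactly supported $y_1$-shift diffeomorphism, you via the additive term $-s_k\chi_k x_1^{(k)}$, which is the identical operation in the birth--death normal form) and then apply Fubini over that parameter to obtain simultaneously the integrability of $1/\sqrt{y_1+s}$ and the $\nu$-nullity of the new degenerate slice. Your handling of the transition region $U_k\setminus W_k$ is slightly loose where $d_F\chi_k\neq 0$, but it is no less rigorous than the paper's own treatment of the interpolation region of its diffeomorphism, and is repaired by the routine compactness argument you indicate (take $s_k$ small so the perturbation cannot create critical points away from $\crit_F(f)$).
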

\begin{proof}
Let $f$ be a nice function. We will modify the function $f$ in local charts around birth-death singularities to make the condition  $\int_{\critr_F(f)}\frac{1}{E_p(d^2_Ff)}d\nu<\infty$ hold.

In a nice local chart of a critical point of birth-death singularity type, one has \begin{align*}
\int_{-1}^{1}\int_{-x}^{1}\frac{1}{\sqrt{y+x}}d\nu(y)dx&=\int_{-1}^{1}\int_{-y}^{1}\frac{1}{\sqrt{y+x}}dxd\nu(y)\\&=\int_{-1}^{1}2\sqrt{y+1}d\nu(y)<+\infty.
\end{align*}
Equally well $\sum_{x\in [-1,1]}\nu(\{x\}\times [-1,1]^{q-1})\leq \nu([-1,1]^q)<+\infty$. It follows that for almost all $\epsilon\in [-1,1]$ with respect to the Lebesgue measure, the function $\frac{1}{\sqrt{y-\epsilon}}$ is $\nu$-integrable and $\nu(\{\epsilon\}\times [-1,1]^{q-1})=0$. 
If one takes such $\epsilon$, and $\phi:\R^n\to \R^n$ any diffeomorphism which is equal to the identity outside $[-1,1]^{n}$ and which is equal on $[-\frac{1}{2},\frac{1}{2}]^{n}$ to the diffeomorphism $$(y,x)\to (y_1-\epsilon,y_2,\dots,x),$$ then the function $f\circ\phi$ is a nice function such that $\int_{\critr_F({f\circ \phi})}\frac{1}{E_p(d^2_F(f\circ \phi))}d\nu$ is finite around the critical point. Doing the above procedure for a finite cover of birth-death singularities finishes the proof.
\end{proof}
\begin{cor}[Connes-Fack Morse inequalities]Let $f$ be a nice function, $c_i=\nu(\crit_\mathcal{F}^i(f))$ the $\nu$-measure of critical points of index $i$, and $\beta_i$ the $\nu$-measure of $\ker(\Delta^i)$. One has $$\sum_{i=0}^k(-1)^{k-i}c_i\geq \sum_{i=0}^k(-1)^{k-i}\beta_i$$
\end{cor}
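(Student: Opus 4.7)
The plan is to run Witten's strategy in the measured foliation setting, combining the operator $D=d+d^*+c(\alpha)$ from \cref{Wittens thm foliations}, the continuous family of traces $\tau_t$ from \cref{continuity trace lemma}, and the algebraic Morse inequality applied to a spectral subcomplex of $D_t^2:=(d+d^*+c(\alpha))^2|_{M\times\{t\}}$. First, by \cref{approx fun} I reduce to the situation $\int_{\critr_F(f)}\frac{1}{E_p(d^2_Ff)}d\nu<+\infty$: the Betti numbers $\beta_i$ depend only on $(M,F)$, while replacing $f$ by the perturbation $h$ from \cref{approx fun} changes each $c_i$ by at most $\varepsilon$, so letting $\varepsilon\to 0^+$ after the inequality is established for $h$ recovers it for $f$.

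For each $t\in[0,1]$ and each $\lambda>0$, let $P_t^\lambda$ denote the spectral projection of $D_t^2$ onto $[0,\lambda]$, well-defined and of finite $\nu$-trace by the compact resolvent statement of \cref{Wittens thm foliations}. For $t>0$, $D_t^2$ commutes with the twisted differential $d_t=e^{-f/t^2}de^{f/t^2}$ and its adjoint, hence the degree-graded subspace $V_t^{(\cdot),\lambda}:=P_t^\lambda\,L^2(\Lambda_\C^\cdot\mathfrak{A}G^*)$ is a finite-$\nu$-dimensional subcomplex of the longitudinal de~Rham complex. Since forms in its orthogonal complement have strictly positive eigenvalue under $D_t^2$, the inclusion $V_t^{(\cdot),\lambda}\hookrightarrow L^2$ induces an isomorphism in cohomology, and conjugation by $e^{f/t^2}$ identifies this cohomology with the longitudinal $L^2$-cohomology, whose $\nu$-dimensions are the $\beta_i$. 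The standard algebraic Morse inequality for complexes in an abelian category equipped with a dimension function then yields
$$\sum_{i=0}^k(-1)^{k-i}\tau_t\bigl(P_t^\lambda|_{\Omega^i}\bigr)\;\geq\;\sum_{i=0}^k(-1)^{k-i}\beta_i\qquad\text{for all }t>0,\;\lambda>0.$$

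It remains to take the limits $t\to 0^+$ and then $\lambda\to 0^+$ on the left side. By \cref{computation spectrum harmonic osc}, over each $x\in \critr_F(f)$ the operator $D_0^2$ restricted to degree-$i$ forms has a zero eigenvalue with multiplicity $\mathbf{1}_{\mathrm{ind}(x)=i}$ and all other eigenvalues bounded below by $2E_p(d^2_Ff)_x$; together with the hypothesis $\int 1/E_p(d^2_Ff)\,d\nu<+\infty$ this gives $\tau_0(P_0^\lambda|_{\Omega^i})\to c_i$ as $\lambda\to 0^+$. For the inner limit $\tau_t(P_t^\lambda|_{\Omega^i})\to \tau_0(P_0^\lambda|_{\Omega^i})$, valid for $\lambda$ outside the measured spectrum of $D_0^2$, I would approximate $\mathbf{1}_{[0,\lambda]}$ uniformly by smooth cutoffs $\chi_n$ and combine \cref{continuity trace lemma} with a norm-approximation of $\chi_n(D_t^2)$ by $C_c^\infty(G)$-kernels, using the compactness of the resolvent. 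The main obstacle is precisely this extension of trace continuity from $C_c^\infty(G)$ to the Borel functional calculus of $D^2$: the cleanest route is to first prove continuity for the resolvents $(1+D^2)^{-N}$ by approximating them in operator norm by smooth kernels, then bootstrap to all bounded continuous functions via a Stone--Weierstrass argument, and finally pass to $\mathbf{1}_{[0,\lambda]}$ by choosing generic $\lambda$ so that the spectral measure puts no mass at $\lambda$. This technical point is exactly what the independent result announced for section~6 is meant to supply.
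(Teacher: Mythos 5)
Your strategy is the paper's strategy: reduce via \cref{approx fun} to the case $\int_{\critr_F(f)}\frac{1}{E_p(d^2_Ff)}d\nu<\infty$, run the algebraic Morse inequality on a low-lying spectral subcomplex whose cohomology is identified with the longitudinal $L^2$-cohomology by conjugation with $e^{f/t}$ (giving $\tau_t(\ker\Delta^i_{t,f})=\beta_i$), pass the alternating sum to $t=0$ using continuity of the traces, and finally shrink the spectral window to recover $c_i$ from the kernels of the harmonic oscillators via \cref{computation spectrum harmonic osc}. The one place where your implementation diverges, and where it is weaker than what the available tools deliver, is the insistence on the sharp projections $P^\lambda_t=\mathbf{1}_{[0,\lambda]}(D_t^2)$ all the way through the limit $t\to 0$. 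The continuity result of Section 6 (\cref{cont trace pro}, part 3) applies to $t\mapsto\tau_t(g(a))$ for a \emph{Lipschitz} $g$ vanishing near $0$, applied to the (fixed, globally defined) resolvent; it does not cover indicator functions, so the step $\tau_t(P^\lambda_t|_{\Omega^i})\to\tau_0(P^\lambda_0|_{\Omega^i})$ is not supplied by the result you are deferring to. Your proposed fix (generic $\lambda$ plus Stone--Weierstrass) can be made to work, but only after an additional sandwiching argument with continuous cutoffs $\phi^-\leq\mathbf{1}_{[0,\lambda]}\leq\phi^+$ together with a verification that the spectral distribution function of $D_0^2$ is finite and continuous at $\lambda$. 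The paper sidesteps all of this by fixing once and for all a decreasing Lipschitz $\phi$ of compact support with $\phi\equiv 1$ on $[0,1]$: the algebraic Morse inequality survives smearing against $-\phi'$, so $\sum_{i}(-1)^{k-i}\tau_t(\phi(\Delta^i_{t,f}))\geq\sum_i(-1)^{k-i}\beta_i$ for $t>0$; \cref{cont trace pro} then gives continuity at $t=0$ verbatim; and only afterwards does one rescale to $\phi(\cdot/\epsilon)$ and use normality of $\tau_0$ to land on $\tau_0(\ker\Delta^i_{0,f})=c_i$. If you replace your sharp projections by such a $\phi$ from the outset, your argument closes with no further work.
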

\begin{proof}
By  \cref{approx fun}, it is enough to prove the corollary under the extra hypothesis that $\int_{\critr_F(f)}\frac{1}{E_p(d^2f_x)}d\mu(x)$ is finite. Let $\phi\in C^0([0,+\infty[)$ be a decreasing continuous function with compact support which is equal to $1$ on the interval $[0,1]$. Algebraic Morse inequalities imply that $$\sum_{i=0}^k(-1)^{k-i}\tau_t(\phi(\Delta^i_{t,f}))\geq \sum_{i=0}^k(-1)^{k-i}\tau_t(\ker(\Delta^i_{t,f}))$$ for any $t\neq 0$, where $\Delta^{i}_{t,f}$ is the Witten deformation constructed in \cref{Wittens thm foliations} at time $t$ acting on forms of degree $i$. Since the multiplication $e^{\frac{f}{t}}$ gives an isomorphism from the deformed De Rham complex at time $t$ with the De Rham complex. It follows that $\tau_t(\ker(\Delta^i_{t,f}))=\beta_i$.

 It follows from \cref{continuity trace lemma}, that \cref{cont trace pro} can be applied to the traces $\tau_t$. Hence \cref{Wittens thm foliations} together with \cref{cont trace pro} imply that $t\to \tau_t(\phi(\Delta^i_{t,f}))$ is a continuous function. It follows that $$\sum_{i=0}^k(-1)^{k-i}\tau_0(\phi(\Delta^i_{0,f}))\geq \sum_{i=0}^k(-1)^{k-i}\beta_i.$$ Applying the previous inequality to $\phi(\frac{\cdot}{\epsilon})$, and taking the limit when $\epsilon$ goes to zero one obtains $$\sum_{i=0}^k(-1)^{k-i}\tau_t(\ker(\Delta^i_{0,f})))\geq \sum_{i=0}^k(-1)^{k-i}\beta_i.$$ By definition one has $\tau_t(\ker(\Delta^i_{0,f}))=c_i.$
 \end{proof}
 \begin{rem}\begin{enumerate}
 \item Notice that the above argument works equally well with a longitudinally Novikov $1$-form, a $1$-closed which is longitudinally closed and which locally is the longitudinal derivative of a nice function. The only difference between the Novikov $1$-form case and the above is that one would obtain the following inequalities \begin{theorem}Let $\alpha$ be a longitudinally nice Novikov $1$-form, $c_i=\nu(\crit_\mathcal{F}^i(\alpha))$ the $\nu$-measure of critical points of index $i$, and $\beta_i(t)$ the $\nu$-measure of $\ker(\Delta_t^i)$ where $\Delta_t=(d+d^*+tc(\alpha))^2$. 

One has $$\sum_{i=0}^k(-1)^{k-i}c_i\geq \limsup_{t\to 0}\sum_{i=0}^k(-1)^{k-i}\beta_i(t)$$

 \end{theorem}
In the classical case of a compact manifold the spectrum is discrete and $\beta_i(t)$ is integar valued which implies that $\beta_i(t)$ is constant for $t$ small enough. The limit is then called the Novikov betti number of $\alpha$. In the foliation case, I don't see any reason for $\beta_i(t)$ to converge as $t\to 0$.
 \end{enumerate}
 \end{rem}

\section{Continuous family of traces}
Recall that a lower-semi continuous trace on a $C^*$-algebra $A$ is a lower semi-continuous function $\tau:A^+\to [0,+\infty]$ such that \begin{enumerate}
\item for every $a,b\in A^+$, $\lambda,\mu\in \R^+$, $\tau(\lambda a+\mu b)=\lambda \tau(a)+\mu \tau(b)$.
\item for every unitary $u\in A$, $\tau(uau^*)=\tau(a).$
\end{enumerate}

It is a classical theorem (see for example \cite{MR0458185}) that for a trace $\tau$, the span of elements $\{x\in A^+: \tau(a)<+\infty\}$ is an ideal on which $\tau$ admits a unique linear extension. In  \cite{MR0458185}, the following inequality is also proved $$|\tau(xy)|\leq \tau(|x|)\norm{y}\forall x,y\in A.$$

\begin{prop}\label{cont trace pro}For each $t\in [0,1]$, let $\tau_t:A^+\to [0,+\infty]$ be a lower semi-continuous trace, $\mathcal{A}\subseteq A$  a dense $*$-subalgebra such that for every $x\in \mathcal{A}$, $t\to \tau_t(x)$ is a finite continuous function. Then
\begin{enumerate}

\item if $a\in A$ such that $\sup_{t\in [0,1]}\tau_t(|a|)<+\infty$, then for every $b\in A$, $t\to \tau_t(ab)$ is a continuous function.
\item If $a\in A^+$, $f\in C^0([0,+\infty[)$ a continuous function which vanishes on a neighbourhood of $0$, then $t\to \tau_t(f(a))$ is a finite continuous function.

It follows from $1$, that if $c\in A$, then the function $t\to \tau_t(f(a)c)$ is continuous.

\item If $a\in A^+$, then $t\to \tau_t(a)$ is lower semi-continuous.
\end{enumerate} 
\end{prop}
\begin{proof}
\begin{enumerate}
 
\item By writing $b=b_1+b_2+ib_3-ib_4$ with $b_i$ positive, we can suppose that $b$ is positive. Let $a_n$, $b_n$ be sequence of elements of $\mathcal{A}$ that converge to $a$ and $b$ respectively. Furthermore we can suppose that $b_n$ is positive for all $n$. Since $\mathcal{A}$ is an algebra, $t\to \tau_t(a_nb_m)$ is a finite continuous for every $n,m$. It follows from the inequality $$|\tau_t(a_nb_m)-\tau_t(ab_{m})|\leq \tau_t(b_m)\norm{a_n-a}$$ and the fact that $t\to \tau_t(b_m)$ is a finite continuous function that as $n\to \infty$, $t\to \tau_t(a_nb_m)$ converges uniformly to $t\to \tau_t(ab_m)$. Hence $t\to \tau_t(ab_m)$ is a finite continuous function for every $m$. It follows from the inequality $$|\tau_t(ab_m)-\tau_t(ab)|\leq \tau_t(|a|)\norm{b_m-b}\leq \sup_{t\in [0,1]}\tau_t(|a|)\norm{b_m-b}$$ that $t\to \tau_t(ab_m)$ converges uniformly to $t\to \tau_t(a b)$. Hence $t\to \tau_t(ab)$ is a finite continuous function.
\item We will use notation from \cite{MR840845}. In particular, for a trace $\tau$, $a\in A^+$ we will denote by $$E_s^{\tau}(a):=\inf\{\alpha\in ]0,+\infty[,\tau(P_{]\alpha,+\infty[}(a)\leq s\}$$ the $s$-number of $a$ with respect to the trace $\tau$, where $P$ is the spectral projection.

It is proved in \cite{MR840845}, that $$|E_s^{\tau}(a)-E_s^{\tau}(b)|\leq \norm{a-b},\; \tau(f(a))=\int_{0}^\infty f(E_s^{\tau}(a))ds$$ for any continuous function $f\in C^0([0,+\infty[)$, $a,b\in A^+$. 



Let $\alpha>0$, $\phi_\alpha(x)=\max(x-\alpha,0)\in C^0([0,+\infty[).$ If $b\in \mathcal{A}^+$ such that $\norm{a-b}\leq \alpha$. Since $\phi_{\alpha}$ is increasing and $E_{s}^{\tau_t}(a)\leq E_s^{\tau_t}(b)+\alpha$, one has  \begin{align*}
\tau_t(\phi_\alpha(a))&=\int_{0}^{+\infty}\phi_{\alpha}(E_s^{\tau_t}(a))ds\\&\leq \int_{0}^{+\infty}\phi_{\alpha}(E_s^{\tau_t}(b)+\alpha)ds\\&=\int_{0}^{+\infty}E_s^{\tau_t}(b)ds=\tau_t(b)
\end{align*}
It follows that $\sup_{t\in [0,1]}\tau_t(\phi_\alpha(a))<+\infty$.

Let $f\in C^0([0,+\infty[)$ be a continuous function which vanishes on $[0,2\alpha[.$ One can write $f=\phi_{\alpha}g$ with $g$ a continuous function such that $g(0)=0$. It follows from $1$ and the above that $t\to \tau_t(f(a))$ is continuous.


\item This follows from $2$ and the monotone convergence theorem (see  \cite{MR840845}) by writing the function $f(x)=x$ as the supremum of functions satisfying the conditions of $3$.
\end{enumerate}
\end{proof}
\begin{rem}Part $2$ of \cref{cont trace pro} equivalently says that for every $x$ in the Pedersen ideal \footnote{Smallest dense two-sided ideal of $A$.}, $t\to \tau_t(x)$ is a finite continuous function.
\end{rem}
 \bibliography{../Biblatex}

\begin{thebibliography}{10}

\bibitem{MR715325}
S. Baaj and P. Julg.
\newblock Th{\'e}orie bivariante de {K}asparov et op{\'e}rateurs non born{\'e}s
  dans les {$C^{\ast} $}-modules hilbertiens.
\newblock {\em C. R. Acad. Sci. Paris S{\'e}r. I Math.}, 296(21):875--878,
  1983.

\bibitem{MR852155}
J.-M. Bismut.
\newblock The {W}itten complex and the degenerate {M}orse inequalities.
\newblock {\em J. Differential Geom.}, 23(3):207--240, 1986.

\bibitem{MR1656031}
B. Blackadar.
\newblock {\em {$K$}-theory for operator algebras}, volume~5 of {\em
  Mathematical Sciences Research Institute Publications}.
\newblock Cambridge University Press, Cambridge, second edition, 1998.

\bibitem{MR0369890}
P.~R. Chernoff.
\newblock Essential self-adjointness of powers of generators of hyperbolic
  equations.
\newblock {\em J. Functional Analysis}, 12:401--414, 1973.

\bibitem{MR679730}
A.~Connes.
\newblock A survey of foliations and operator algebras.
\newblock In {\em Operator algebras and applications, {P}art {I} ({K}ingston,
  {O}nt., 1980)}, volume~38 of {\em Proc. Sympos. Pure Math.}, pages 521--628.
  Amer. Math. Soc., Providence, R.I., 1982.

\bibitem{MR775126}
A.~Connes and G.~Skandalis.
\newblock The longitudinal index theorem for foliations.
\newblock {\em Publ. Res. Inst. Math. Sci.}, 20(6):1139--1183, 1984.

\bibitem{MR548112}
A. Connes.
\newblock Sur la th{\'e}orie non commutative de l'int{\'e}gration.
\newblock In {\em Alg{\`e}bres d'op{\'e}rateurs ({S}{\'e}m., {L}es
  {P}lans-sur-{B}ex, 1978)}, volume 725 of {\em Lecture Notes in Math.}, pages
  19--143. Springer, Berlin, 1979.

\bibitem{MR823176}
A. Connes.
\newblock Noncommutative differential geometry.
\newblock {\em Inst. Hautes {\'E}tudes Sci. Publ. Math.}, (62):257--360, 1985.

\bibitem{MR1303779}
A. Connes.
\newblock {\em Noncommutative geometry}.
\newblock Academic Press, Inc., San Diego, CA, 1994.

\bibitem{MR2276915}
A. Connes and T. Fack.
\newblock Morse inequalities for foliations.
\newblock In {\em {$C^\ast$}-algebras and elliptic theory}, Trends Math., pages
  61--72. Birkh{\"a}user, Basel, 2006.

\bibitem{Debord:2017aa}
C. Debord and G. Skandalis.
\newblock Blowup constructions for {L}ie groupoids and a {B}outet de {M}onvel
  type calculus.
\newblock {\em Arxiv:1705.09588}.

\bibitem{ClaireGeorges}
C. Debord and G. Skandalis.
\newblock Lie groupoids, pseudodifferential calculus and index theory.
\newblock {\em Arxiv:1907.05258}.

\bibitem{MR0458185}
J. Dixmier.
\newblock {\em {$C\sp*$}-algebras}.
\newblock North-Holland Publishing Co., Amsterdam-New York-Oxford, 1977.
\newblock Translated from the French by Francis Jellett, North-Holland
  Mathematical Library, Vol. 15.

\bibitem{MR1760426}
Y.~M. Eliashberg and N.~M. Mishachev.
\newblock Wrinkling of smooth mappings. {II}. {W}rinkling of embeddings and
  {K}. {I}gusa's theorem.
\newblock {\em Topology}, 39(4):711--732, 2000.

\bibitem{MR840845}
T. Fack and H. Kosaki.
\newblock Generalized {$s$}-numbers of {$\tau$}-measurable operators.
\newblock {\em Pacific J. Math.}, 123(2):269--300, 1986.

\bibitem{MR0246142}
I.~C. Gohberg and M.~G. Kreun.
\newblock {\em Introduction to the theory of linear nonselfadjoint operators}.
\newblock Translated from the Russian by A. Feinstein. Translations of
  Mathematical Monographs, Vol. 18. American Mathematical Society, Providence,
  R.I., 1969.

\bibitem{MR0341518}
M.~Golubitsky and V.~Guillemin.
\newblock {\em Stable mappings and their singularities}.
\newblock Springer-Verlag, New York-Heidelberg, 1973.
\newblock Graduate Texts in Mathematics, Vol. 14.

\bibitem{MR1050489}
M. Hilsum.
\newblock Fonctorialit{\'e} en {$K$}-th{\'e}orie bivariante pour les
  vari{\'e}t{\'e}s lipschitziennes.
\newblock {\em $K$-Theory}, 3(5):401--440, 1989.

\bibitem{MR1070701}
K. Igusa.
\newblock {$C^1$} local parametrized {M}orse theory.
\newblock {\em Topology Appl.}, 36(3):209--231, 1990.

\bibitem{MR918241}
G.~G. Kasparov.
\newblock Equivariant {$KK$}-theory and the {N}ovikov conjecture.
\newblock {\em Invent. Math.}, 91(1):147--201, 1988.

\bibitem{MR1435704}
D. Kucerovsky.
\newblock The {$KK$}-product of unbounded modules.
\newblock {\em $K$-Theory}, 11(1):17--34, 1997.

\bibitem{MR1325694}
E.~C. Lance.
\newblock {\em Hilbert {$C^*$}-modules}, volume 210 of {\em London Mathematical
  Society Lecture Note Series}.
\newblock Cambridge University Press, Cambridge, 1995.
\newblock A toolkit for operator algebraists.

\bibitem{Lesch:2018aa}
M. Lesch and B. Mesland.
\newblock Sums of regular selfadjoint operators in hilbert-C*-modules.
\newblock 03 2018.

\bibitem{MR3669118}
J.-M. Lescure, D. Manchon, and S. Vassout.
\newblock About the convolution of distributions on groupoids.
\newblock {\em J. Noncommut. Geom.}, 11(2):757--789, 2017.

\bibitem{MR896907}
K.~Mackenzie.
\newblock {\em Lie groupoids and {L}ie algebroids in differential geometry},
  volume 124 of {\em London Mathematical Society Lecture Note Series}.
\newblock Cambridge University Press, Cambridge, 1987.

\bibitem{MR1467076}
B. Monthubert and F. Pierrot.
\newblock Indice analytique et groupo\"\i des de {L}ie.
\newblock {\em C. R. Acad. Sci. Paris S{\'e}r. I Math.}, 325(2):193--198, 1997.

\bibitem{MR1451874}
M. Morse.
\newblock {\em The calculus of variations in the large}, volume~18 of {\em
  American Mathematical Society Colloquium Publications}.
\newblock American Mathematical Society, Providence, RI, 1996.
\newblock Reprint of the 1932 original.

\bibitem{MR1687747}
V. Nistor, A. Weinstein, and P. Xu.
\newblock Pseudodifferential operators on differential groupoids.
\newblock {\em Pacific J. Math.}, 189(1):117--152, 1999.

\bibitem{MR0133785}
K. Nomizu and H. Ozeki.
\newblock The existence of complete {R}iemannian metrics.
\newblock {\em Proc. Amer. Math. Soc.}, 12:889--891, 1961.

\bibitem{MR548006}
G. Pedersen.
\newblock {\em {$C^{\ast} $}-algebras and their automorphism groups}, volume~14
  of {\em London Mathematical Society Monographs}.
\newblock Academic Press, Inc. [Harcourt Brace Jovanovich, Publishers],
  London-New York, 1979.

\bibitem{MR941624}
J. Pradines.
\newblock Remarque sur le groupo\"\i de cotangent de {W}einstein-{D}azord.
\newblock {\em C. R. Acad. Sci. Paris S{\'e}r. I Math.}, 306(13):557--560,
  1988.

\bibitem{MR0493419}
M. Reed and B. Simon.
\newblock {\em Methods of modern mathematical physics. {I}. {F}unctional
  analysis}.
\newblock Academic Press, New York-London, 1972.

\bibitem{MR584266}
J. Renault.
\newblock {\em A groupoid approach to {$C^{\ast} $}-algebras}, volume 793 of
  {\em Lecture Notes in Mathematics}.
\newblock Springer, Berlin, 1980.

\bibitem{MR2227132}
S. Vassout.
\newblock Unbounded pseudodifferential calculus on {L}ie groupoids.
\newblock {\em J. Funct. Anal.}, 236(1):161--200, 2006.

\bibitem{MR683171}
E. Witten.
\newblock Supersymmetry and {M}orse theory.
\newblock {\em J. Differential Geom.}, 17(4):661--692 (1983), 1982.

\end{thebibliography}
\bibliographystyle{plain}

\end{document}